\documentclass[11pt, reqno]{amsart}

\usepackage[a4paper,
            top=30mm,bottom=30mm,
            left=32mm,right=32mm]{geometry}

\usepackage[T1]{fontenc}

\usepackage{amsmath, amsthm, amsfonts, amssymb, mathtools}

\usepackage[shortlabels]{enumitem}
\usepackage[unicode=true]{hyperref}

\theoremstyle{plain}
\newtheorem{theorem}{Theorem}[section]
\newtheorem{lemma}[theorem]{Lemma}

\theoremstyle{definition}

\theoremstyle{remark}

\numberwithin{equation}{section}

\def\d{\mathrm{d}}
\def\eps{\varepsilon}
\def\N{\mathbb{N}}
\def\R{\mathbb{R}}
\def\I{\mathcal{I}}
\def\II{\I\times\I}
\def\L{\mathrm{L}}
\def\intI{\int_{\I}}
\def\iintII{\intI\intI}
\def\dd{\overline\nabla}
\def\Fcont{\mathcal{F}}
\def\Gcont{\mathcal{G}}
\def\Fkin{\Fcont_{\mathrm{kin}}}
\def\Gkin{\Gcont_{\mathrm{kin}}}
\def\Fmet{\Fcont_{\mathrm{met}}}
\def\Ltwozero{L^2_0}
\def\Lqzero{L^q_0}
\def\LinftyzeroN{L^\infty_{0,N}}
\def\Linftyzero{L^\infty_0}
\def\BB{\mathbb{B}}
\def\QQ{\mathcal{Q}}
\def\ZZ{\mathcal{Z}}
\def\half{\frac{1}{2}}
\def\Norm#1{\left\|#1\right\|}
\newcommand{\QN}{\QQ^N}
\newcommand{\ZN}{\ZZ^N}
\newcommand{\GraphG}{\mathbb{G}}
\newcommand{\Vset}{\mathbb{V}}
\newcommand{\Eset}{\mathbb{E}}

\title[]{A duality approach to the dense graph limit \\ for biological transportation networks}

\author[]{Nuno J. Alves}
\address[N. J. Alves]{CEMSE Division, King Abdullah University of Science and Technology, Thuwal 23955-6900, Saudi Arabia}
\email{nuno.januarioalves@kaust.edu.sa}

\author[]{Jan Haskovec}
\address[J. Haskovec]{CEMSE Division, King Abdullah University of Science and Technology, Thuwal 23955-6900, Saudi Arabia}
\email{jan.haskovec@kaust.edu.sa}

\begin{document}

\begin{abstract}
We develop a duality-based formulation of the dense graph limit for a variational model of biological transportation networks, where edge conductivities balance pumping power against metabolic cost.
In contrast to the pressure-based approach of our previous work, which required conductivities to be uniformly positive, the present formulation allows general nonnegative conductivity kernels.
The kinetic energy is defined through a dual variational principle, which remains meaningful for degenerate integrable kernels and assigns infinite energy when the associated nonlocal Poisson problem is not solvable.
Using this formulation, we prove $\Gamma$-convergence in the sense of Mosco of the semidiscrete network energies to a continuum energy on symmetric nonnegative kernels.
The convergence is obtained in the natural $L^\gamma$ topology dictated by the metabolic term. The $\Gamma$-$\liminf$ inequality follows directly from the dual formulation, while $\Gamma$-$\limsup$ recovery sequences are constructed by positive regularization of the conductivity kernels.
\end{abstract}

\subjclass[2020]{Primary 49J45; Secondary 05C90, 35B27, 92C42}
\keywords{Biological transportation networks, graphon limit, degenerate conductivities, duality, nonlocal variational problems, Mosco convergence, $\Gamma$-convergence}

\maketitle
\thispagestyle{empty}

\section{Introduction}
In this paper we develop a duality-based formulation of the dense graph limit for the network formation model studied in~\cite{alves2026rigorous}.
This reformulation removes the a priori assumption of uniform positivity of the edge conductivities.
The discrete network formation model \cite{HuCaiPRL, HuCaiCMS} is posed on a sequence of undirected graphs
$\{\GraphG^N\}_{N\in\N}$,
\[
   \GraphG^N=(\Vset^N,\Eset^N),
\]
consisting of a set of vertices $\Vset^N = \{1,2,\dots,N\}$
and a set of unoriented edges (vessels) $\Eset^N \subseteq \Vset^N\times\Vset^N$.
We denote the corresponding adjacency matrices $W^N=(W_{ij}^N)_{i,j=1}^N$,
\[
   W_{ij}^N := \begin{cases}
     1 & \mbox{if } (i,j)\in \Eset^N,\\
     0 & \mbox{if } (i,j)\notin \Eset^N.
     \end{cases}
\]
For each edge $(i,j)\in \Eset^N$ we prescribe its length $\L_{ij}^N = \L_{ji}^N >0$.
At each vertex $i\in \Vset^N$ 
we prescribe a source/sink intensity $S_i^N\in\R$ of the material to be transported through the network.
We impose the global mass-balance condition
\[
   \sum_{i=1}^N S_i^N=0.
\]
We denote $C_{ij} = C_{ji} \geq 0$ the conductivity of the edge $(i,j)\in \Eset^N$,
and $P_i$ the pressure at vertex $i\in \Vset^N$.
We assume low Reynolds number of the flow through the network, so that the flow
$Q_{ij} = - Q_{ji}$ through the edge $(i,j)\in \Eset^N$ is proportional to its conductivity and the pressure drop between its endpoints,
\[
   Q_{ij}=C_{ij}\frac{P_j-P_i}{\L_{ij}^N}.
\]
Introducing the rescaled variable $B_{ij} := C_{ij} / \L_{ij}^N$,
local conservation of mass takes the form of the Kirchhoff law
\begin{equation}   \label{eq:K}
   - \frac{1}{N^2} \sum_{j=1}^N W_{ij}^N B_{ij} (P_j-P_i) =S_i^N
   \qquad \mbox{for all } i\in \Vset^N.
\end{equation}
The energy cost functional proposed in \cite{HuCaiPRL, HuCaiCMS} consists of a pumping power term and a metabolic cost term,
\begin{equation} \label{def:Fn}
F^N[B] := F^N_{\mathrm{kin}}[B] + \nu F^N_{\mathrm{met}}[B],
\end{equation}
with
\begin{equation}  \label{eq:Fnkin}
   F^N_{\mathrm{kin}}[B] := \frac{1}{2N^2}\sum_{i=1}^N\sum_{j=1}^N B_{ij}(P_j-P_i)^2 W_{ij}^N, 
\end{equation}
and
\begin{equation}  \label{eq:Fnmet} 
   F^N_{\mathrm{met}}[B] := \frac{1}{2\gamma N^2} \sum_{i=1}^N\sum_{j=1}^N B_{ij}^\gamma (\L_{ij}^N)^{\gamma+1} W_{ij}^N,
\end{equation}
where the pressures $P=(P_i)_{i\in\Vset^N}$ are a solution (unique up to an additive constant) of the linear system \eqref{eq:K}.
If \eqref{eq:K} is not solvable with the given conductivities $B = (B_{ij})_{(i,j)\in\Eset^N}$, we set $F^N[B] := \infty$.
Throughout the paper we assume that the metabolic exponent $\gamma\geq 1$,
and, for simplicity, we set the metabolic constant $\nu:=1$.
Let us note that for $\gamma\geq 1$ the problem \eqref{eq:K}-\eqref{def:Fn}
is convex (and strictly convex for $\gamma>1$); see, e.g., \cite[Proposition 1]{HV24}.

In \cite{alves2026rigorous} the rigorous dense graphon limit of the model \eqref{eq:K}-\eqref{def:Fn}
was derived as the size of the graph $N\to\infty$.
A graphon \cite{LovaszSzegedyJCTSB, Lovasz:2012} is a symmetric measurable function $w: [0,1]^2 \to [0,1]$.
In the context of our model we shall interpret the graphon $w$ as an appropriate limit of the sequence of
adjacency matrices $W^N$.
The approach developed in \cite{alves2026rigorous} relied on a reformulation of the energy functional \eqref{def:Fn} in terms of an integral functional acting on the space of piecewise constant functions on $[0,1]^2$. The limit was then established in the sense of $\Gamma$-convergence, with the Kirchhoff law \eqref{eq:K} represented in the limit by an elliptic integral equation.
Based on the $\Gamma$-limit, it was concluded that global minimizers of the discrete energy functional \eqref{eq:K}--\eqref{def:Fn} converge, in an appropriate sense, to global minimizers of the limiting (continuum) functional. For a general theory on $\Gamma$-convergence methods we refer to~\cite{Maso93}.

The results of \cite{alves2026rigorous} were established under the significantly restrictive assumption
that the conductivities $B_{ij}$ are uniformly positive, i.e.,
$B_{ij} \geq r >0$ for some $r > 0$ and all $(i,j)\in\Eset^N$, $N\in\N$. This assumption was adopted in order to ensure uniform ellipticity of the Kirchhoff law \eqref{eq:K}, which implied existence of a solution and uniform a priori bounds.

The main contribution of this paper is to remove this structural assumption by reformulating the kinetic energy in dual form.
This allows us to pose the variational problem on the cone of nonnegative conductivities, which is its natural setting.
We then construct the graphon $\Gamma$-limit for the resulting extended-value functional. On the uniformly positive class, the limiting problem can be identified with the minimization of the energy
\begin{equation} \label{def:Fcont}
\Fcont[b] := \Fkin[b] + \Fmet[b],
\end{equation}
with
\begin{equation}  \label{eq:Fcontkin}
   \Fkin[b] :=  \frac12 \iintII  b(x,y) \left(p(x)-p(y)\right)^2 \d w(x,y),
   \end{equation}
   and
   \begin{equation} \label{eq:Fcontmet}
   \Fmet[b] :=  \frac{1}{2\gamma} \iintII  b(x,y)^\gamma \ell(x,y)^{\gamma+1} \d w(x,y),
\end{equation}
where $\I := [0,1]$, $\ell\in L^\infty(\II)$ is the limit of the sequence of edge lengths, and $p\in L^2(\I)$ is a solution of the
nonlocal diffusion (Poisson-type) equation on $\I$,
\begin{equation}  \label{eq:Poisson}
   \intI b(x,y) (p(x)-p(y)) w(x,y) \,\d y = \sigma(x),
\end{equation}
where the datum $\sigma\in L^2(\I)$ represents the continuum intensity of sources and sinks.

Let us also point out that in \cite{alves2026rigorous} the limiting
conductivity $b=b(x,y)$ was required to belong to
$L^\omega(\II)$, with $\omega=\max\{2,\gamma\}$. In the present work
we obtain the variational convergence in the natural space $L^\gamma(\II)$, the exponent dictated by the metabolic energy.
In fact, the $\Gamma$-$\liminf$ condition only requires weak convergence in $L^1(\II)$.
The stronger assumption $b\in L^\gamma(\II)$ is needed for the construction of recovery sequences, in order to pass to the limit in the metabolic part of the energy.

Our derivation is based on the observation
that the kinetic part of the energy $\Fkin[b]$ admits a dual representation for any nonnegative conductivity kernel $b\in L^1(\II)$.
We take this dual representation as the definition of the kinetic term on the cone of symmetric nonnegative kernels.
The result is an extended-value functional which takes the value $\infty$ when equation \eqref{eq:Poisson} is not solvable
due to degeneracy of the conductivity kernel $b$.

Our main result establishes $\Gamma$-convergence in the sense of Mosco~\cite{mosco1969convergence,mosco1994composite,Maso93} of the semidiscrete energies to the continuum functional on the cone of nonnegative kernels, without any a priori uniform positivity assumption; see Theorem~\ref{thm:main}.

With the dual-based reformulation of the kinetic energy, the $\Gamma$-$\liminf$ inequality is proved
by passing to the limit directly in the affine pairing of the semidiscrete dual functional with bounded test functions.
The semidiscrete formulation serves as a convenient rewriting of the discrete problem.

The $\Gamma$-$\limsup$ inequality is obtained by the regularization of the conductivity kernel,
\[
   b^r:=b+r,
\]
with $r>0$. This is followed by a suitable choice of the recovery sequence as $r\to 0^+$, using the continuity of the limiting energy under this regularization.

Several recent papers have studied continuum limits of the model \eqref{eq:K}-\eqref{def:Fn}.
However, their constructions were embedded in the physical space rather than in the sense of graphons.
In \cite{HKM:CMS:2019} a continuum limit is derived formally
for the special case when the graph represents a two-dimensional rectangular grid.
This leads to an integral energy functional on the set of diagonal matrices
(permeability tensors), constrained by a Poisson equation.
In \cite{HKM:CommPDE:2019} this procedure is justified rigorously
in terms of the $\Gamma$-limit of the sequence of properly
rescaled and reformulated energy functionals.
Finally, in \cite{HMP:CMS:2022}, a formal
limit has been derived in the more general setting when
the discrete graphs are triangulations of a bounded two-dimensional domain.
This leads to a similar integral energy functional as in the previous case,
however, defined on the set of symmetric positive semidefinite tensors.
The functional is again constrained by a Poisson equation.

The paper is organized as follows. In Section~\ref{sec:notation} we give an overview of the notation.
In Section~\ref{sec:main} we formulate our main result.
In Section~\ref{sec:Poisson} we set up the function space for the Poisson equation \eqref{eq:Poisson} and introduce the dual
formulation of the kinetic energy functional.
In Section~\ref{sec:aux} we establish two auxiliary lemmas connecting the discrete and semidiscrete formulations of the problem.
Sections~\ref{sec:liminf} and \ref{sec:limsup} contain the proofs of the $\Gamma$-$\liminf$ and $\Gamma$-$\limsup$ inequalities.

\section{Notation}\label{sec:notation}
For any $N\in\N$ we shall denote $[N]:=\{1,\dots,N\}$.
We introduce the real intervals
\[
   \I := [0,1],
   \qquad
   \I_i^N := \left[\frac{i-1}{N},\frac{i}{N}\right] 
   \qquad\mbox{for }
   i\in [N]. 
\]
Moreover,
\[
   \R^N_0 := \left\{z\in \R^N:\, \sum_{i=1}^N z_i = 0\right\},
\]
and for $q\in[1,\infty]$,
\[
   \Lqzero(\I) := \left\{u\in L^q(\I):\, \intI u(x)\, \d x=0\right\}.
\]
We denote by $\LinftyzeroN(\I)$ the finite-dimensional subspace of $L^\infty_0(\I)$ consisting of functions that are constant on each interval $\I_i^N$, $i\in[N]$.

We introduce the set of symmetric matrices with nonnegative entries,
\begin{equation}  \label{def:BNplus}   
   \BB_+^N := \left\{B\in \R^{N\times N}:\, B_{ij} = B_{ji}\ge 0 \text{ for all } i,j\in [N] \right\},
\end{equation}
and for $r>0$ the uniformly positive class
\begin{equation}  \label{def:BNr}
   \BB_r^N := \left\{B\in \R^{N\times N}: \, B_{ij} = B_{ji} \ge r  \text{ for all } i,j\in [N] \right\}.
\end{equation}

For $q\in[1,\infty]$ we define the nonnegative cone
\begin{equation}   \label{def:Lqplus}
   L_+^q(\II):=\big\{b\in L^q(\II):\, b(x,y)=b(y,x) \text{ and } b(x,y)\ge 0\text{ a.e.\ on } \II\big\},
\end{equation}
and for $q\in[1,\infty]$ and $r>0$ the uniformly positive class
\begin{equation}   \label{def:Lqr}
   L_r^q(\II):=\big\{b\in L^q(\II):\, b(x,y)=b(y,x) \text{ and } b(x,y)\ge r\text{ a.e.\ on } \II\big\}.
\end{equation}

For any function $p:\I\to\R$ we denote
\begin{equation}\label{def:dd}
   \dd p(x,y):=p(y)-p(x), \qquad (x,y)\in\II.
\end{equation}

For $N\in\N$ we define the operators $\QN: \R^N \to L^\infty(\I)$ by
\[
   \QN[z](x):=\sum_{i=1}^N z_i \, \chi_i^N(x) \qquad\mbox{for } x\in\I,
\]
where $z\in\R^N$ and $\chi^N_i$ is the characteristic function of the interval $\I^N_i$.
By a slight abuse of notation, we also use the symbol $\QN$ for the operator
$\QN: \R^{N\times N} \to L^\infty(\II)$, given by
\[   
   \QN[B](x,y) := \sum_{i=1}^N\sum_{j=1}^N B_{ij} \, \chi^N_i(x) \, \chi^N_j(y) \qquad \mbox{for } (x,y)\in \I\times\I.
\]
For the adjacency matrix $W^N \in \R^{N\times N}$ of the graph $\GraphG^N$
and the matrix of edge lengths $\L^N \in \R^{N\times N}$
we denote
\[
   w^N:=\QN[W^N],
   \qquad
   \ell^N:=\QN[\L^N].
\]

We also use the cell-average projection $\ZN:L^1(\I)\to L^\infty(\I)$ given by
\begin{equation}   \label{def:ZN}
   \ZN[u](x):=N\int_{\I_i^N}u(s)\,\d s,
   \qquad x\in \I_i^N,
\end{equation}
and, again with a slight abuse of notation,
\[
   \ZN[u](x,y):=N^2\int_{\I_i^N}\int_{\I_j^N}u(s,t)\,\d s \, \d t,
   \qquad (x,y)\in \I_i^N\times \I_j^N.
\]
We note that, when restricted to $L^2(\I)$ and, respectively, to $L^2(\II)$, the operators $\ZN$ are orthogonal projections, and therefore self-adjoint.

\section{Assumptions and main result}\label{sec:main}

We shall work with a prescribed sequence of graphs
$\GraphG^N = (\Vset^N,\Eset^N)$ and source/sink terms $S^N\in \R^N_0$.
Let us recall that we denote $W^N\in \R^{N\times N}$ the adjacency matrix of $\GraphG^N$
and $w^N := \QN[W^N]$ its pixel picture.

We assume that there exists a function $\sigma\in \Ltwozero(\I)$ such that
the source/sink terms are given by 
\begin{equation} \label{ass:S}
   S_i^N := \int_{\I_i^N} \sigma(x) \, \d x \qquad \mbox{for } i\in[N].   
\end{equation}
Moreover, we assume that there exists $\lambda>0$ such that for all $N\in\N$,
    \begin{equation}  \label{ass:A1}
        \sum_{i=1}^N \sum_{j=1}^N  (z_i - z_j)^2 W^N_{ij} \geq \lambda N \sum_{i=1}^N z_i^2  \qquad\mbox{for all } z\in\R_0^N,
    \end{equation}
and that there exists $w\in L^\infty(\II)$ such that
    \begin{equation}  \label{ass:A2}
       w^N \to w \qquad \mbox{in } L^1(\II) \qquad \mbox{as } N\to\infty.
    \end{equation}
    
The spectral gap assumption \eqref{ass:A1} is equivalent to the lower bound $\mathfrak{f}(\GraphG^N) \geq \lambda N/2$
on the Fiedler number (algebraic connectivity) of $\GraphG^N$,
which is the second smallest eigenvalue of the matrix Laplacian of $W^N$; see, e.g., \cite{Fiedler, Mieghem}.
Therefore, \eqref{ass:A1} enforces uniform connectivity of the graphs $\GraphG^N$.

Assumption \eqref{ass:A2} restricts the validity of our results
to $0-1$ valued graphons (also called deterministic graphons), i.e., $\mbox{range}(w) \subset \{0, 1\}$.
This was also the setting treated in~\cite{alves2026rigorous}.
Then, the usual convergence of $w^N$ to $w$
in the cut norm indeed implies convergence in the $L^1$ norm topology;
see, e.g., \cite[Proposition 8.24]{Lovasz:2012}.
Of course, due to the uniform boundedness of $w^N$ in $L^\infty(\II)$,
the convergence \eqref{ass:A2} holds also in $L^q(\II)$ for any $1 < q<\infty$.

Moreover, \cite[Lemma 5.3]{alves2026rigorous} yields that \eqref{ass:A1} and \eqref{ass:A2} imply the spectral gap property for the limiting kernel $w=w(x,y)$,
\begin{equation}  \label{eq:SG}
    \iintII  (u(x) - u(y))^2 \, \d w(x, y) \geq \lambda \intI u(x)^2 \, \d x \qquad\mbox{for all } u\in L^2_0(\I).
\end{equation}

Finally, without loss of generality (after an eventual rescaling)
we assume that the edge lengths $\ell^N:=\QN[\L^N]$ are uniformly bounded,
\begin{equation}\label{ass:L1}
   \Norm{\ell^N}_{L^\infty(\II)}\le 1,
\end{equation}
and that there exists $\ell\in L^\infty(\II)$ such that
\begin{equation}\label{ass:L2}
   \ell^N\to \ell \qquad \mbox{in }L^1(\II) \qquad \mbox{as } N \to \infty.
\end{equation}

The main idea of our approach is to reformulate the kinetic energy functional $F^N_{\mathrm{kin}}[B]$,
defined in \eqref{eq:Fnkin}, as
\begin{equation}\label{def:Kdisc}
   G_{\mathrm{kin}}^N[B]:=
   \sup_{\Phi\in \R^N_0}
   \left\{
      2\sum_{i=1}^N S_i^N\Phi_i
      -\frac{1}{2N^2}\sum_{i=1}^N\sum_{j=1}^N W_{ij}^N B_{ij}(\Phi_j-\Phi_i)^2
   \right\},
\end{equation}
so that the total discrete energy reads
\begin{equation}\label{def:GNdisc}
G^N[B] := G_{\mathrm{kin}}^N[B] + F_{\mathrm{met}}^N[B],
\end{equation}
where $F_{\mathrm{met}}^N$ is the functional in~\eqref{eq:Fnmet}.
In Lemma \ref{lem:discrete-compatibility}  we establish the equality of the functionals~\eqref{def:Fn} and \eqref{def:GNdisc}, i.e., $G^N[B] = F^N[B]$ for all $B\in\BB^N_+$.
This includes the case $G^N[B] = F^N[B] = \infty$ when the Kirchhoff law \eqref{eq:K}
does not admit a solution  with the given matrix of conductivities $B\in\BB^N_+$.

In order to pass to the $\Gamma$-limit as $N\to\infty$, we reformulate the discrete functionals~\eqref{def:GNdisc}
in terms of semidiscrete integral functionals defined for $b\in L_+^1(\II)$ by
\begin{equation}\label{def:GN}
   \Gcont^N[b]:=\Gkin^N[b] + \Fmet^N[b],
\end{equation}
with
\begin{equation}\label{eq:GNkin}
   \Gkin^N[b]:=
   \sup_{\varphi\in \LinftyzeroN(\I)}
   \left\{
      2\intI \sigma(x)\varphi(x)\,\d x
      -\half \iintII b(x,y)(\dd \varphi(x,y))^2\,\d w^N(x,y)
   \right\}, 
   \end{equation}
and
\begin{equation} \label{eq:FNmet}
   \Fmet^N[b] :=\frac{1}{2\gamma}\iintII b(x,y)^\gamma\ell^N(x,y)^{\gamma+1}\,\d w^N(x,y).
\end{equation}
In Lemma \ref{lem:matrix-embedding}
we prove the equivalence of \eqref{def:GNdisc}
and \eqref{def:GN} in the sense that
\[
   \Gcont^N[\QN[B]]=G^N[B]
\]
for all $B\in \BB_+^N$ and $N\in\N$.

The limiting energy functional is defined for $b\in L_+^1(\II)$ by
\begin{equation}\label{def:Gcont}
   \Gcont[b]:=\Gkin[b]+\Fmet[b],
\end{equation}
with
\begin{equation}\label{eq:Gkin}
   \Gkin[b]:=
   \sup_{\varphi\in \Linftyzero(\I)}
   \left\{
      2\intI \sigma(x) \, \varphi(x)\,\d x
      -\half \iintII b(x,y)\big(\dd \varphi(x,y)\big)^2\,\d w(x,y)
   \right\}, 
\end{equation}
and
\begin{equation} \label{eq:Gmet}
   \Fmet[b]:=\frac{1}{2\gamma}\iintII b(x,y)^\gamma \, \ell(x,y)^{\gamma+1}\,\d w(x,y).
\end{equation}
We regard $\Gcont[b]$ as an extended-value functional on $L^1_+(\II)$.
Indeed, the metabolic term $\Fmet[b]$ may be infinite
if the nonnegative term $b^\gamma\ell^{\gamma+1} w$ is not integrable on $\II$.
Moreover, the kinetic part $\Gkin[b]$ may take the value $\infty$,
namely, if and only if the Poisson equation \eqref{eq:Poisson}
is not solvable with the kernel $b$.
We discuss this aspect in detail in Section~\ref{sec:Poisson}.

We now state the main result of this paper. We use Mosco convergence in its standard sense, namely the combination of a $\Gamma$-$\liminf$ condition along weakly convergent sequences and a $\Gamma$-$\limsup$ condition along strongly convergent recovery sequences; see~\cite{mosco1969convergence,mosco1994composite,Maso93}.

\begin{theorem}\label{thm:main}
Let $\gamma\geq 1$, $\sigma\in L^2_0(\I)$, and assume \eqref{ass:S}--\eqref{ass:L2}. Then the following statements hold.
\begin{enumerate}[label={\rm (\roman*)}]
\item For every sequence $\{b^N\}_{N\in\N}\subset L_+^1(\II)$ converging weakly in $L^1(\II)$ to $b\in L_+^1(\II)$, we have
\begin{equation}\label{eq:liminf-main}
   \Gcont[b]\le \liminf_{N\to\infty} \Gcont^N[b^N].
\end{equation}
\item For every $b\in L_+^\gamma(\II)$, there exists a sequence $\{b^N\}_{N\in\N}\subset L_+^\gamma(\II)$ converging strongly in $L^\gamma(\II)$ to $b$, and
\begin{equation}\label{eq:limsup-main}
   \Gcont[b]\ge \limsup_{N\to\infty} \Gcont^N[b^N].
\end{equation}
\end{enumerate}
Consequently, the sequence of energy functionals $\{\Gcont^N\}_{N\in\N}$ $\Gamma$-converges to $\Gcont$ in the sense of Mosco in $L^\gamma(\II)$.
\end{theorem}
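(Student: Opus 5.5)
For the $\Gamma$-$\liminf$ inequality (i), I will argue pointwise in the test function and then take the supremum. Fix $\varphi\in\Linftyzero(\I)$ and set $\varphi^N:=\ZN[\varphi]$. Since $\sigma$ has zero mean, $\ZN$ preserves zero mean, so $\varphi^N\in\LinftyzeroN(\I)$ with $\|\varphi^N\|_\infty\le\|\varphi\|_\infty$, and $\varphi^N\to\varphi$ a.e. and in every $L^q$, $q<\infty$. By definition,
\[
   \Gcont^N[b^N]\ \ge\ 2\intI\sigma\varphi^N\,\d x-\half\iintII b^N(\dd\varphi^N)^2 w^N\,\d x\,\d y .
\]
The first term converges to $2\intI\sigma\varphi\,\d x$. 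For the second term, write $b^N g^N$ with $g^N:=(\dd\varphi^N)^2w^N$. These functions are uniformly bounded by $4\|\varphi\|_\infty^2$ and converge a.e. (along a subsequence, hence for the whole sequence by uniqueness of the limit) to $g:=(\dd\varphi)^2 w$. Here $w^N\to w$ a.e. along subsequences, by \eqref{ass:A2}.

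To pass to the limit in $\iintII b^N g^N$, I combine two facts. First, weak $L^1$ convergence of $b^N$ gives equi-integrability by Dunford--Pettis. Second, by Egorov, $g^N\to g$ uniformly outside a set of small measure. Together these give $\iintII b^N g^N\to\iintII b g$. This is the step needing the most care: weak convergence paired with strongly bounded, a.e.\ convergent multipliers. After working with a realizing subsequence for the $\liminf$, we obtain
\[
   \liminf_N \Gcont^N[b^N]\ \ge\ 2\intI\sigma\varphi-\half\iintII b(\dd\varphi)^2\,\d w
   \qquad\mbox{for each } \varphi\in\Linftyzero(\I),
\]
and the supremum over $\varphi$ gives $\Gkin[b]$ on the right.

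For the metabolic part, $(b^N)^\gamma(\ell^N)^{\gamma+1}w^N$ has the following structure: the map $b\mapsto\iintII b^\gamma h$ is convex and weakly lower semicontinuous for fixed $h\ge0$, and $h^N:=(\ell^N)^{\gamma+1}w^N\to h$ boundedly a.e. I will treat this by the same Egorov truncation: on the good set, $|h^N-h|\le\eps$ gives
\[
   \iint (b^N)^\gamma h^N\ \ge\ \iint_{E} (b^N)^\gamma h-\eps\iint (b^N)^\gamma .
\]
If $\liminf\Fmet^N$ is finite, one must avoid needing a bound on $\|b^N\|_\gamma$. A cleaner route is to use that $\Fmet^N[b^N]=\frac1{2\gamma}\|b^N(h^N)^{1/\gamma}\|_\gamma^\gamma$ and that $b^N(h^N)^{1/\gamma}$ converges weakly in $L^1$ to $bh^{1/\gamma}$ (same equi-integrability argument). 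The $\liminf$ then follows from weak lower semicontinuity of the $L^\gamma$ norm, passing through weak $L^1$ limits, which holds for $\gamma\ge1$. Summing the kinetic and metabolic bounds uses superadditivity of $\liminf$, which completes (i).

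For the $\Gamma$-$\limsup$ inequality (ii), I first treat $b\in L^\gamma_r(\II)$ with $r>0$ and take $b^N:=b$, a constant recovery sequence. The metabolic term converges by dominated convergence, since $\ell^N,w^N$ converge boundedly a.e. along subsequences and $b^\gamma\in L^1$. For the kinetic term, uniform positivity together with \eqref{ass:A1} yields a uniform coercivity
\[
   \half\iintII b(\dd\varphi)^2\,\d w^N\ \ge\ \tfrac{r\lambda}{2}\|\varphi\|_2^2
   \qquad\mbox{on } \LinftyzeroN(\I),
\]
so the maximizers $\varphi^N$ are bounded in $L^2$. The concave quadratic structure then makes $\Gkin^N[b]$ equal to $\intI\sigma p^N$, with $p^N$ solving the semidiscrete Poisson equation. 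Weak compactness and the $L^1$ convergence of $w^N$ give $\limsup\Gkin^N[b]\le\Gkin[b]$. Concretely, along a subsequence I pass to $p^N\rightharpoonup p$, identify $p$ as the solution of \eqref{eq:Poisson}, and conclude $\intI\sigma p^N\to\intI\sigma p=\Gkin[b]$. Identifying the limit equation requires testing against $\ZN[\psi]$ and handling the products $b\,w^N p^N$. Because $b$ is only in $L^\gamma$, I expect this to be the main obstacle. My first attempt would be to approximate $b$ additionally by bounded truncations $\min(b,M)+r$, which are still uniformly positive, and to exploit monotonicity of $\Gkin$ (it is nonincreasing in $b$).

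For general $b\in L^\gamma_+$, I set $b^r:=b+r$. Since $\Gkin$ is nonincreasing in $b$, we have $\Gkin[b^r]\le\Gkin[b]$, and $\Fmet[b^r]\to\Fmet[b]$ by dominated convergence. Hence $\limsup_{r\to0}\Gcont[b^r]\le\Gcont[b]$, and a diagonal choice $r=r(N)\to0$, using metrizability of strong $L^\gamma$ convergence, produces the recovery sequence $b^N:=b^{r(N)}$ (or its truncated version). Mosco convergence then follows by combining (i) with the observation that strong $L^\gamma$ convergence implies weak $L^1$ convergence.
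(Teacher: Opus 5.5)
\textbf{Part (i).} This is the paper's argument: you test with $\ZN[\varphi]$, use equi-integrability of $b^N$ against bounded multipliers converging in measure, and then use weak $L^1$ convergence of $b^N(h^N)^{1/\gamma}$ together with weak lower semicontinuity of the $L^\gamma$ norm. Two slips need fixing. First, it is $\varphi$, not $\sigma$, whose zero mean $\ZN$ preserves. Second, a.e.\ convergence along subsequences does not give a.e.\ convergence of the whole sequence, so you must argue via subsequences of subsequences.

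\textbf{Reduction in part (ii).} This is sound and slightly leaner than the paper. For the $\limsup$ you only need the monotonicity $\Gkin[b+r]\le\Gkin[b]$ and $\Fmet[b+r]\to\Fmet[b]$, not the full continuity of Lemma~\ref{lem:delta-continuity}. Also, the constant sequence $b+r$ works as well as $\ZN[b+r]$ for the theorem as stated.

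\textbf{The gap: the core estimate of part (ii).} What is missing is $\limsup_{N}\Gkin^N[b+r]\le\Gkin[b+r]$ when $b$ is unbounded. You sketch an identification of the limit Poisson equation, flag the products $b\,w^N\dd p^N$ as an obstacle, and propose truncating to $b_M:=\min\{b,M\}+r$ and using monotonicity. That does not close the argument. From $b_M\le b+r$ you get $\Gkin^N[b+r]\le\Gkin^N[b_M]$, hence $\limsup_N\Gkin^N[b+r]\le\inf_M\Gkin[b_M]$. But monotonicity only gives $\Gkin[b_M]\ge\Gkin[b+r]$, which is the wrong direction. You would still need $\lim_{M\to\infty}\Gkin[b_M]=\Gkin[b+r]$, an interchange of a supremum with a decreasing limit. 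That requires exactly the compactness argument you were trying to avoid.

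\textbf{The missing idea.} This is the paper's Lemma~\ref{lem:GNkin}: no limit equation needs to be identified. Take maximizers $p^N$ for the kernel $b+r$; they are bounded in $L^2$ by coercivity, so $p^N\rightharpoonup p$ along a subsequence. It then suffices to show
\[
   \iintII (b+r)(\dd p)^2\,\d w\le\liminf_{N\to\infty}\iintII (b+r)(\dd p^N)^2\,\d w^N .
\]
You get this by truncating the \emph{weight}, not the kernel in the functional:
\begin{itemize}
\item $\min\{b+r,M\}w^N\to\min\{b+r,M\}w$ boundedly in every $L^q$, so $\sqrt{\min\{b+r,M\}w^N}\,\dd p^N\rightharpoonup\sqrt{\min\{b+r,M\}w}\,\dd p$ weakly in $L^2(\II)$.
\item Lower semicontinuity of the norm, together with $\min\{b+r,M\}w^N\le (b+r)w^N$, gives the bound for each fixed $M$.
\item Monotone convergence lets $M\to\infty$.
\end{itemize}
Then $\limsup_N\Gkin^N[b+r]\le\mathcal{J}_{b+r}[p]\le\Gkin[b+r]$, the last step via bounded zero-mean truncations of $p$. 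Truncating the weight gives a lower bound on the quadratic term, which is exactly the direction you need.

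Your identification route can also be rescued by writing $b+r=\sqrt{b+r}\,\sqrt{b+r}$. The sequence $\sqrt{(b+r)w^N}\,\dd p^N$ is bounded in $L^2$ by the energy identity, and $\sqrt{(b+r)w^N}\,\dd\ZN[\psi]\to\sqrt{(b+r)w}\,\dd\psi$ strongly in $L^2$ because $\sqrt{b+r}\in L^2$. As written, however, this step is absent from your proposal.
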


We note that the $\Gamma$-$\liminf$ condition in Theorem~\ref{thm:main} is proved under the weaker assumption of weak convergence in $L^1(\II)$. Since $\II$ has finite measure, weak convergence in $L^\gamma(\II)$ implies weak convergence in $L^1(\II)$, and therefore this gives the $\Gamma$-$\liminf$ condition required for Mosco convergence in $L^\gamma(\II)$. On the other hand, the $\Gamma$-$\limsup$ condition is stated for $b\in L_+^\gamma(\II)$, which is the natural assumption needed to pass to the limit in the metabolic part of the energy.

\section{The Poisson equation and the dual formulation}\label{sec:Poisson}
We discuss the well-posedness and dual formulation of the Poisson equation \eqref{eq:Poisson},
which in its weak formulation with a test function  $\varphi$ reads
\begin{equation}  \label{eq:Poissonweak}
   \frac12 \iintII b(x,y)\dd p(x,y)\dd \varphi(x,y)\,\d w(x,y) = \intI \sigma(x) \,\varphi(x)\,\d x.
\end{equation}
Let us recall that $\sigma$ belongs to $\Ltwozero(\I)$ and that $w\in L^\infty(\II)$ satisfies the spectral gap condition~\eqref{eq:SG}.
We start by noting that for $b \in L^\infty_r(\II)$ with $r>0$,
the Lax--Milgram theorem immediately provides the existence of a unique
$p\in\Ltwozero(\I)$, so that \eqref{eq:Poissonweak} is verified for all $\varphi\in \Ltwozero(\I)$.
Moreover, we observe that \eqref{eq:Poissonweak} is the Euler--Lagrange equation
of the functional $\mathcal{J}_b : \Ltwozero \to \R$,
\begin{equation}   \label{def:Jb}
   \mathcal{J}_b[\varphi] := 2 \intI \sigma(x) \, \varphi(x) \,\d x - \frac{1}{2} \iintII b(x,y) \big(\dd\varphi(x,y)\big)^2 \d w(x,y),
\end{equation}
that is, whenever \eqref{eq:Poissonweak} admits a solution $p\in \Ltwozero$, we have
\[
   \sup_{\varphi\in \Ltwozero} \mathcal{J}_b[\varphi]  = \mathcal{J}_b[p] < \infty.
\]
In \cite[Lemma 5.5]{alves2026rigorous} the well-posedness of \eqref{eq:Poissonweak}
has been extended to $b \in L^2_r(\II)$ as follows.

\begin{lemma}\label{lem:Poisson-b}
Let $r>0$ and $b \in L^2_r(\II)$. Assume \eqref{ass:A1} and \eqref{ass:A2}. Then there exists a unique $p \in \Ltwozero(\I)$ such that~\eqref{eq:Poissonweak} holds
for all test functions $\varphi\in L^\infty_0(\I)$.
Moreover, 
\begin{equation}  \label{eq:testPoisson}
   \frac12 \iintII b \, (\dd p)^2 \, \d w(x,y) = \intI \sigma \,  p \, \d x < \infty,
\end{equation}
and
\begin{equation}   \label{est:p}
     \Norm{p}_{L^2(\I)}  \leq  \frac{2}{\lambda r} \Norm{\sigma}_{L^2(\I)}.
\end{equation}
\end{lemma}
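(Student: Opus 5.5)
We want existence of $p\in\Ltwozero(\I)$ with \eqref{eq:Poissonweak} for all $\varphi\in\Linftyzero(\I)$, the identity \eqref{eq:testPoisson}, and the bound \eqref{est:p}. The plan is to work in the weighted energy space rather than in $L^2$. Define
\[
   H_b:=\Big\{u\in\Ltwozero(\I):\ \iintII b\,(\dd u)^2\,\d w<\infty\Big\},
   \qquad
   a(u,v):=\half\iintII b\,\dd u\,\dd v\,\d w .
\]
Since $b\ge r$ and $w\ge0$, the spectral gap \eqref{eq:SG} gives
\[
   a(u,u)\ \ge\ \frac{r}{2}\iintII(\dd u)^2\,\d w\ \ge\ \frac{\lambda r}{2}\Norm{u}_{L^2}^2 .
\]
Hence $a$ is an inner product on $H_b$.

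\emph{Step 1: $H_b$ is a Hilbert space containing $\Linftyzero(\I)$.} For $\varphi\in\Linftyzero$ we have $|\dd\varphi|\le 2\Norm{\varphi}_\infty$ and $b\in L^2\subset L^1$, so $\varphi\in H_b$. For completeness, take an $a$-Cauchy sequence. By the coercivity above it is $L^2$-Cauchy, so it converges in $L^2$ to some $u$. Along a subsequence $\dd u_n\to\dd u$ a.e. on $\II$, and Fatou's lemma then shows $u\in H_b$ with $a(u_n-u,u_n-u)\to0$.

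\emph{Step 2: apply Riesz.} The functional $\varphi\mapsto\intI\sigma\varphi$ is bounded on $H_b$, because
\[
   \Big|\intI\sigma\varphi\Big|\le\Norm{\sigma}_{L^2}\Norm{\varphi}_{L^2}\le\Norm{\sigma}_{L^2}\Big(\tfrac{2}{\lambda r}\Big)^{1/2}a(\varphi,\varphi)^{1/2}.
\]
Riesz (or Lax--Milgram) gives a unique $p\in H_b$ with $a(p,\varphi)=\intI\sigma\varphi$ for all $\varphi\in H_b$, and in particular for all $\varphi\in\Linftyzero$. Testing with $\varphi=p$ gives \eqref{eq:testPoisson}, and finiteness holds because $p\in H_b$. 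Combining the identity with coercivity,
\[
   \frac{\lambda r}{2}\Norm{p}^2\le a(p,p)=\intI\sigma p\le\Norm{\sigma}\Norm{p},
\]
which yields \eqref{est:p}.

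\emph{Step 3: uniqueness among $p\in\Ltwozero$ solving \eqref{eq:Poissonweak} only for $\Linftyzero$ test functions.} I expect this to be the main obstacle. Such a $p$ is a priori only in $L^2$, and we must ensure the left side of \eqref{eq:Poissonweak} even makes sense. Since $b\in L^2$, $w\in L^\infty$ and $\dd p\in L^2(\II)$, the integrand $b\,\dd p\,\dd\varphi\,w$ is integrable, and this is exactly where the assumption $b\in L^2$ (rather than $L^1$) is used. Take two such solutions and let $d$ be their difference, so $\iintII b\,\dd d\,\dd\varphi\,\d w=0$ for all $\varphi\in\Linftyzero$. I would test with the truncations $\varphi_k:=T_k d-\intI T_k d$, where $T_k$ is truncation at level $k$.

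\emph{Passing to the limit $k\to\infty$.} Since $T_k$ is monotone and $1$-Lipschitz, $\dd d\,\dd(T_k d)\ge0$ and this product increases to $(\dd d)^2$ pointwise, while the mean-subtraction does not change $\dd\varphi_k$. Monotone convergence then yields $\iintII b(\dd d)^2\,\d w=0$. Using $b\ge r$ and \eqref{eq:SG}, this forces $d=0$.
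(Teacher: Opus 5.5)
Your proof is correct and complete. The paper does not prove this lemma; it quotes it from Lemma~5.5 of the authors' earlier work, so there is no in-paper argument to compare against line by line.

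Your argument is essentially the uniformly positive case of the paper's own energy-space construction in Section~\ref{sec:Poisson}. Your $H_b$ plays the role of $\mathcal{H}_Q$ from \eqref{def:HQ}. For $b\ge r$, the spectral gap \eqref{eq:SG} makes $\mathcal{N}_Q$ trivial and makes the $Q$-norm dominate the $L^2$ norm, which yields \eqref{cond:cont}. Your Fatou argument in Step~1 then lets the completion be realized inside $\Ltwozero(\I)$, and your Step~2 is Lemma~\ref{lem:genPoisson}.

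Your route has two further advantages.
\begin{itemize}
\item Because Riesz gives the identity for every test function in $H_b$, you may test with $p$ itself. This gives \eqref{eq:testPoisson} and \eqref{est:p} with no truncation or limiting procedure. In particular, existence, the energy identity and the bound hold verbatim for $b\in L^1_r(\II)$.
\item You correctly isolate that $b\in L^2$ is needed only to give \eqref{eq:Poissonweak} a meaning for an arbitrary $p\in\Ltwozero(\I)$, that is, for the uniqueness claim in the full class $\Ltwozero(\I)$ rather than in $H_b$.
\end{itemize}

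Your truncation test $\varphi_k=T_kd-\intI T_kd\,\d x$ handles the uniqueness cleanly. The monotonicity in $k$ that you invoke is true, since $T_k t-T_k s$ is the length of $[s,t]\cap[-k,k]$ for $s\le t$, but it does not follow from ``monotone and $1$-Lipschitz'' alone. You do not need it: the integrands $b\,w\,\dd d\,\dd(T_kd)$ are nonnegative and converge pointwise to $b\,w\,(\dd d)^2$, so Fatou's lemma already gives $\iintII b\,(\dd d)^2\,\d w=0$. The same argument yields uniqueness among all $p\in\Ltwozero(\I)$ with $b\,\dd p\,w\in L^1(\II)$, even when $b$ is merely in $L^1_r(\II)$.
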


We now study the case when $b=b(x,y)$ is only nonnegative on $\II$ (but strictly positive on the interior of $\II$),
but not uniformly bounded away from zero.
To gain an intuition, let us consider the example $b(x,y) := 2xy$, with $w(x,y) :=1$ for all $(x,y)\in\II$.
Then a straightforward calculation reveals the formal solution
\[
   p(x) = \frac{\sigma(x)}{x} - \intI \frac{\sigma(y)}{y} \d y,
\]
whenever the integral exists. Let us consider, for $\alpha\in (0,1)$, the choice
\begin{equation*}
  \sigma(x) := \begin{dcases}
  x^\alpha, \qquad & x\in (0,1/2), \\
    - \frac{2^{-\alpha}}{\alpha+1}, \qquad & x\in (1/2,1),
  \end{dcases}
\end{equation*}
which belongs to $\Linftyzero(\I)$. For this choice of $\sigma$, the integral
\[
   C:=\intI \frac{\sigma(y)}{y}\,\d y
\]
is finite. Moreover, for $x\in(0,1/2)$, the formal solution is given by
\[
   p(x)=x^{\alpha-1}-C.
\]
Now choose $q>1$ and set $\alpha:=\frac{q-1}{q}$, which gives $q(\alpha-1)=-1$.
For small enough $\delta\in(0,1/2)$ we have
\[
    |p(x)|\ge \frac12 x^{\alpha-1}  \qquad\mbox{for all }x\in(0,\delta).
\]
Consequently,
\[
   \int_0^1 |p(x)|^q\,\d x
   \ge
   2^{-q}\int_0^\delta x^{q(\alpha-1)}\,\d x
   =
   2^{-q}\int_0^\delta \frac{\d x}{x}
   =
   \infty.
\]
Therefore, no general $L^q$-regularity of the solution can be expected for any $q>1$
for general kernels $b=b(x,y)$.

Inspired by this observation, we define, for fixed $w\in L_+^\infty(\II)$ and $b\in L^1_+(\II)$, the bilinear form
\[
   Q(\varphi,\psi) := \frac12 \iintII b(x,y) \, \dd \varphi(x,y) \, \dd \psi(x,y)\,\d w(x,y) \qquad\mbox{for } \varphi, \psi \in L^\infty_0(\I).
\]
We denote $\mathcal{N}_Q := \big\{\varphi\in L^\infty_0(\I):\,  Q(\varphi,\varphi) = 0 \big\}$ the null space of the associated quadratic form.
Then $Q$ is an inner product on the quotient space $L^\infty_0(\I) / \mathcal{N}_Q$ and $\Norm{\varphi}_Q := Q(\varphi,\varphi)^{1/2}$ is the induced norm.
The natural space to look for solutions of \eqref{eq:Poissonweak} with $b\in L^1_+(\II)$ is then the (energy) Hilbert space
\begin{equation}   \label{def:HQ}
   \mathcal{H}_Q := \overline{ L^\infty_0(\I) / \mathcal{N}_Q }^{\Norm{\cdot}_Q},
\end{equation}
given by the completion of the quotient space $L^\infty_0(\I) / \mathcal{N}_Q$ in the norm $\Norm{\cdot}_Q$.
For the solvability of \eqref{eq:Poissonweak} we need that the linear functional
\begin{equation}   \label{eq:LinFun}
   \varphi \mapsto \intI \sigma(x) \, \varphi(x) \,\d x \qquad\mbox{for } \varphi\in \Linftyzero(\I)
\end{equation}
extends continuously to $\mathcal{H}_Q$. 
In other words, we need a constant $C_{Q,\sigma} >0$ such that
\begin{equation}  \label{cond:cont}
   \left| \intI \sigma(x) \, \varphi(x) \,\d x \right| \leq C_{Q,\sigma} \Norm{\varphi}_Q  \qquad\mbox{for all }  \varphi\in \Linftyzero(\I).
\end{equation}
This is essentially a compatibility condition between the (possibly degenerate) product kernel $b w\in L^1_+(\II)$
and the right-hand side $\sigma\in L^2_0(\I)$.
If it is met, then we denote by $\mathfrak{S}: \mathcal{H}_Q \to \R$ the continuous extension of \eqref{eq:LinFun}.
 The Riesz representation theorem immediately gives the following result.

\begin{lemma}\label{lem:genPoisson}
Let $w\in L_+^\infty(\II)$, $b\in L^1_+(\II)$ and $\sigma\in L^2_0(\I)$ be such that \eqref{cond:cont} holds.
Then there exists a unique $p\in \mathcal{H}_Q$ such that
\[   
   Q(p, \varphi) = \mathfrak{S}[\varphi] \qquad\mbox{for all }\varphi\in \mathcal{H}_Q.
\]
\end{lemma}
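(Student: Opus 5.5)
The statement is a direct application of the Riesz representation theorem on the Hilbert space $\mathcal{H}_Q$ from \eqref{def:HQ}. The plan is to check that all ingredients are well defined, extend the source functional to the completion, and then invoke Riesz.

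\emph{Step 1: $Q$ is well defined on $L^\infty_0(\I)$.}
For $\varphi,\psi\in L^\infty_0(\I)$ we have $|\dd\varphi|\le 2\Norm{\varphi}_{L^\infty}$, and the same bound holds for $\psi$. Since $b\in L^1_+(\II)$ and $w\in L^\infty_+(\II)$, the integrand $b\,\dd\varphi\,\dd\psi\, w$ is integrable, with
\[
   |Q(\varphi,\psi)|\le 2\Norm{b}_{L^1}\Norm{w}_{L^\infty}\Norm{\varphi}_{L^\infty}\Norm{\psi}_{L^\infty}.
\]
So $Q$ is a symmetric, nonnegative bilinear form on $L^\infty_0(\I)$. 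By Cauchy--Schwarz for semi-inner products, $\mathcal{N}_Q$ is a linear subspace, and $Q(\varphi,\psi)=0$ whenever $\varphi\in\mathcal{N}_Q$. Hence $Q$ descends to an inner product on $L^\infty_0(\I)/\mathcal{N}_Q$, and its completion $\mathcal{H}_Q$ is a Hilbert space with the extended inner product, still denoted $Q$.

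\emph{Step 2: extension of the source functional.}
By \eqref{cond:cont}, the functional $\varphi\mapsto\intI\sigma\varphi\,\d x$ vanishes on $\mathcal{N}_Q$, since $\Norm{\varphi}_Q=0$ forces the left-hand side to be zero. It therefore defines a linear functional on the quotient, bounded by $C_{Q,\sigma}$ in $\Norm{\cdot}_Q$. A bounded linear functional on a dense subspace extends uniquely by continuity (the BLT theorem), so we obtain $\mathfrak{S}\in\mathcal{H}_Q^*$ with $\Norm{\mathfrak{S}}\le C_{Q,\sigma}$.

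\emph{Step 3: Riesz representation.}
The Riesz representation theorem gives a unique $p\in\mathcal{H}_Q$ with $Q(p,\varphi)=\mathfrak{S}[\varphi]$ for all $\varphi\in\mathcal{H}_Q$. Uniqueness follows by testing the difference of two solutions against itself: this gives $\Norm{p_1-p_2}_Q=0$, so $p_1=p_2$ in $\mathcal{H}_Q$.

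Main obstacle. There is no substantive difficulty. The only points requiring care are the well-definedness checks in Steps 1 and 2, namely finiteness of $Q$ for $b$ merely in $L^1$ and compatibility of $\mathfrak{S}$ with the quotient. Both are handled by boundedness of $L^\infty$ test functions and by \eqref{cond:cont}, respectively.
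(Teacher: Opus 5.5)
Your proposal is correct and follows the paper's approach: the paper proves this lemma by invoking the Riesz representation theorem on the Hilbert space $\mathcal{H}_Q$, with $\mathfrak{S}$ taken as the continuous extension of the source functional guaranteed by \eqref{cond:cont}. Your Steps 1 and 2 make explicit what the paper leaves implicit: that $Q$ is finite on $L^\infty_0(\I)$ for $b\in L^1_+(\II)$, and that the source functional vanishes on $\mathcal{N}_Q$ and so extends to the completion.
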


Moreover, introducing the extension of the functional \eqref{def:Jb},
\[
   \overline{\mathcal{J}_b}[\varphi] := 2 \mathfrak{S}[\varphi] - \Norm{\varphi}_Q^2 \qquad\mbox{for } \varphi\in \mathcal{H}_Q,
\]
the solution $p\in \mathcal{H}_Q$ constructed in Lemma \ref{lem:genPoisson} is its unique maximizer, i.e.,
\[
   \max_{\varphi\in\mathcal{H}_Q} \overline{\mathcal{J}_b}[\varphi] = \overline{\mathcal{J}_b}[p] = \Norm{p}_Q^2. 
\]
Consequently, if \eqref{cond:cont} holds, the definition \eqref{eq:Fcontkin} of $\Fkin$ can be meaningfully extended to $b\in L^1_+(\II)$
by setting $\Fkin[b] := \Norm{p}_Q^2$.
Since $\mathfrak{S}$ is a continuous extension of \eqref{eq:LinFun}, we have
\[
     \max_{\varphi\in\mathcal{H}_Q} \overline{\mathcal{J}_b}[\varphi] = \sup_{\varphi\in\Linftyzero(\I)} \mathcal{J}_b[\varphi],
\]
and we can finally write
\[
   \Fkin[b] = \Norm{p}_Q^2 = \sup_{\varphi\in\Linftyzero(\I)} \mathcal{J}_b[\varphi] = \Gkin[b],
\]
with $\mathcal{J}_b[\varphi]$ given by \eqref{def:Jb} and $\Gkin[b]$ defined in \eqref{eq:Gkin}.

If \eqref{cond:cont} does not hold, we have $\Fkin[b] := \Gkin[b] = \infty$.
Indeed, in this case either the functional
$\varphi\mapsto \int_\I \sigma\varphi\,\d x$ does not vanish on
$\mathcal N_Q$, in which case $\mathcal J_b$ is unbounded
along a null direction, or there exists a sequence $\varphi_n\in L^\infty_0(\I)$
with $\Norm{\varphi_n}_Q=1$ and $\left| \int_\I\sigma\varphi_n\, \d x \right| \to\infty$.

\section{Discrete and semidiscrete functionals}\label{sec:aux}

We first show the equivalence of the pressure-based definition~\eqref{def:Fn}
and the duality-based definition~\eqref{def:GNdisc} of the discrete energy functionals.

\begin{lemma}\label{lem:discrete-compatibility}
For every $N\in\N$ and all $B\in \BB_+^N$ we have
\begin{equation}\label{eq:Gdisc-equals-Fdisc}
   G^N[B] = F^N[B],
\end{equation}
with $F^N[B]$ defined in~\eqref{def:Fn} and $G^N[B]$ defined in~\eqref{def:GNdisc}.
\end{lemma}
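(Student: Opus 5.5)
Since the metabolic parts of $G^N$ and $F^N$ coincide by definition, it suffices to show $G^N_{\mathrm{kin}}[B]=F^N_{\mathrm{kin}}[B]$ for every $B\in\BB^N_+$, including the value $\infty$. We work in finite dimensions with the weighted graph Laplacian $L=L(B)$ acting on $\R^N$,
\[
   (L\Phi)_i := -\frac{1}{N^2}\sum_{j=1}^N W^N_{ij}B_{ij}(\Phi_j-\Phi_i),
\]
which is symmetric and positive semidefinite because $W^N_{ij}B_{ij}=W^N_{ji}B_{ji}\ge 0$. Summation by parts (using symmetry) gives
\[
   \Phi\cdot L\Phi = \frac{1}{2N^2}\sum_{i,j}W^N_{ij}B_{ij}(\Phi_j-\Phi_i)^2 .
\]
Thus the Kirchhoff law \eqref{eq:K} reads $LP=S^N$, and the objective in \eqref{def:Kdisc} is
\[
   J(\Phi)=2\,S^N\cdot\Phi-\Phi\cdot L\Phi,\qquad \Phi\in\R^N_0 .
\]
For a solution $P$ of \eqref{eq:K}, $F^N_{\mathrm{kin}}[B]=P\cdot LP$.

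\emph{Case 1: \eqref{eq:K} is solvable.} Let $LP=S^N$. Since $L\mathbf 1=0$ and $S^N\in\R^N_0$, we may shift $P$ by a constant so that $P\in\R^N_0$; this changes neither $LP$ nor $P\cdot LP$. For any $\Phi\in\R^N_0$, completing the square gives
\[
   J(\Phi)=P\cdot LP-(\Phi-P)\cdot L(\Phi-P)\le P\cdot LP,
\]
with equality at $\Phi=P$. Hence $G^N_{\mathrm{kin}}[B]=P\cdot LP=F^N_{\mathrm{kin}}[B]$. This also shows that $F^N_{\mathrm{kin}}$ does not depend on which solution $P$ is chosen; if $\ker L$ is larger than the constants, $P$ is determined only modulo $\ker L$, but the quadratic form is insensitive to that.

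\emph{Case 2: \eqref{eq:K} is not solvable.} Then $F^N[B]=\infty$ by convention, and we must show $G^N_{\mathrm{kin}}[B]=\infty$. Since $L$ is symmetric, $\operatorname{Ran}L=(\ker L)^\perp$. Non-solvability means $S^N\notin\operatorname{Ran}L$, so there is $\Psi\in\ker L$ with $S^N\cdot\Psi\neq 0$. Because $S^N\perp\mathbf 1$, we may project $\Psi$ onto $\R^N_0$; this keeps $\Psi\in\ker L$, since $\mathbf 1\in\ker L$. Then $J(t\Psi)=2t\,S^N\cdot\Psi$ for all $t\in\R$, which is unbounded above, so $G^N_{\mathrm{kin}}[B]=\infty$.

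The main point needing care is exactly this degenerate case. When $B$ has zero entries, or $W^N$ combined with $B$ disconnects the graph, $\ker L$ can contain nonconstant vectors. One must then use the Fredholm alternative $\operatorname{Ran}L=(\ker L)^\perp$ instead of invertibility on $\R^N_0$. One must also check that the null direction can be taken inside the admissible set $\R^N_0$. The spectral gap \eqref{ass:A1} is not needed here.
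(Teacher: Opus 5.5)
Your proof is correct and follows the same route as the paper. In the solvable case, the Kirchhoff solution $P$, normalized to lie in $\R^N_0$, maximizes the concave quadratic dual functional, and the maximal value equals the kinetic energy; in the non-solvable case, the Fredholm alternative gives $G^N_{\mathrm{kin}}[B]=\infty$. Your completing-the-square identity replaces the paper's Euler--Lagrange/testing-with-$P$ step, and you spell out the null-direction argument (including projecting $\Psi\in\ker L$ into $\R^N_0$) that the paper only invokes by name.
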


\begin{proof}
It is sufficient to prove the equality for the kinetic part of the energy.
Let us observe that the Kirchhoff law \eqref{eq:K} is the Euler--Lagrange equation of the functional $J_B : \R^N_0 \to \R$,
\begin{equation}   \label{def:JB}
   J_B[\Phi] := \sum_{i=1}^N S_i^N \Phi_i - \frac{1}{4N^2}\sum_{i=1}^N\sum_{j=1}^N W_{ij}^N B_{ij}(\Phi_j-\Phi_i)^2,
\end{equation}
that is, whenever \eqref{eq:K} admits a solution $P\in\R^N_0$, we have
\[
   \sup_{\Phi\in\R^N_0} J_B[\Phi] = J_B[P].
\]
Multiplying~\eqref{eq:K} by $P_i$ and summing over $i\in[N]$ gives
\[
   \frac{1}{2N^2}\sum_{i=1}^N\sum_{j=1}^N W_{ij}^N B_{ij}(P_j-P_i)^2 = \sum_{i=1}^N S_i^N P_i.
\]
Consequently,
\[
   \sup_{\Phi\in\R^N_0} J_B[\Phi] = \frac{1}{4N^2}\sum_{i=1}^N\sum_{j=1}^N W_{ij}^N B_{ij}(P_j-P_i)^2,
\]
and for the kinetic part (first term) of the discrete energy functional \eqref{def:Fn} we have
\[
   \frac{1}{2N^2}\sum_{i=1}^N\sum_{j=1}^N W_{ij}^N B_{ij}(P_j-P_i)^2 = 2 \sup_{\Phi\in\R^N_0} J_B[\Phi] = G_{\mathrm{kin}}^N[B],
\]
with $G_{\mathrm{kin}}^N[B]$ defined in \eqref{def:Kdisc}.

On the other hand, if the Kirchhoff law \eqref{eq:K} does not admit a solution with the kernel $B\in \BB_+^N$,
then by the Fredholm alternative we have $G_{\mathrm{kin}}^N[B] = \infty$ and, by definition, $F_{\mathrm{kin}}^N[B] = \infty$.
\end{proof}

We now establish a connection between the discrete functional $G^N$ and
its semidiscrete counterpart $\Gcont^N$.

\begin{lemma}\label{lem:matrix-embedding}
For every $N\in\N$ and $B\in \BB_+^N$ we have
\begin{equation}\label{eq:matrix-embedding}
   \Gcont^N[\QN[B]]=G^N[B],
\end{equation}
with $\Gcont^N$ defined in \eqref{def:GN} and $G^N$ defined in \eqref{def:GNdisc}.
\end{lemma}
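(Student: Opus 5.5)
The proof is a direct computation: we identify the two suprema term by term through the map $\QN$, and then the two metabolic terms in the same way. The key observation is that $\QN:\R^N\to L^\infty(\I)$ restricts to a linear bijection from $\R^N_0$ onto $\LinftyzeroN(\I)$. Indeed, every $\varphi\in\LinftyzeroN(\I)$ equals $\QN[\Phi]$ with $\Phi_i$ the value of $\varphi$ on $\I_i^N$, and
\[
   \intI \QN[\Phi]\,\d x=\frac1N\sum_{i=1}^N\Phi_i .
\]
So the zero-mean constraints correspond exactly, and the supremum in \eqref{eq:GNkin} can be rewritten as a supremum over $\Phi\in\R^N_0$ with $\varphi=\QN[\Phi]$.

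Next, fix $\Phi\in\R^N_0$, set $\varphi=\QN[\Phi]$, and compare the two objective functionals. For the source term, \eqref{ass:S} gives
\[
   \intI\sigma\varphi\,\d x=\sum_{i=1}^N\Phi_i\int_{\I_i^N}\sigma\,\d x=\sum_{i=1}^N S_i^N\Phi_i .
\]
For the quadratic term, on each cell $\I_i^N\times\I_j^N$ (which overlap only on null sets) we have
\[
   \QN[B]=B_{ij},\qquad w^N=W^N_{ij},\qquad \dd\varphi=\Phi_j-\Phi_i .
\]
The integrand is therefore constant on each cell of measure $N^{-2}$, which gives
\[
   \half\iintII \QN[B](\dd\varphi)^2\,\d w^N=\frac{1}{2N^2}\sum_{i=1}^N\sum_{j=1}^N W^N_{ij}B_{ij}(\Phi_j-\Phi_i)^2 .
\]
Hence the two objective functionals coincide under the bijection, so $\Gkin^N[\QN[B]]=G^N_{\mathrm{kin}}[B]$. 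This identity includes the case where both sides equal $+\infty$, since a supremum of equal functions over corresponding sets is equal as an extended real number.

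Finally, the metabolic terms are handled cell by cell in the same way. On $\I_i^N\times\I_j^N$ the integrand $\QN[B]^\gamma(\ell^N)^{\gamma+1}w^N$ equals $B_{ij}^\gamma(\L^N_{ij})^{\gamma+1}W^N_{ij}$, so $\Fmet^N[\QN[B]]=F^N_{\mathrm{met}}[B]$ by \eqref{eq:Fnmet}. Adding the kinetic and metabolic identities yields \eqref{eq:matrix-embedding}.

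There is no real obstacle here. The only points to check are that $\QN[B]\in L^1_+(\II)$, which holds because $B$ is symmetric and nonnegative so $\Gcont^N[\QN[B]]$ is defined, and that $\QN$ maps the constraint set $\R^N_0$ exactly onto $\LinftyzeroN(\I)$, including the mean-zero normalization.
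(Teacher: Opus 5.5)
Your proposal is correct and follows essentially the same route as the paper: identify $\LinftyzeroN(\I)$ with $\R^N_0$ via $\QN$, match the source term using \eqref{ass:S}, match the quadratic term cell by cell, and do the same for the metabolic term. Your explicit verification that $\QN$ maps the mean-zero constraint exactly onto $\R^N_0$, and your remark that the identity also covers the case where both suprema are $+\infty$, are small additions that the paper leaves implicit.
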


\begin{proof}
Let $b=\QN[B]$. Every $\varphi\in \LinftyzeroN(\I)$ can be written uniquely as $\varphi=\QN[\Phi]$ with $\Phi\in \R^N_0$.
Then, using \eqref{ass:S},
\[
   2\intI \sigma(x) \, \varphi(x)\,\d x
   =2\sum_{i=1}^N \Phi_i\int_{\I_i^N}\sigma(x)\,\d x
   =2\sum_{i=1}^N S_i^N\Phi_i.
\]
Moreover,
\[
   \half\iintII b(x,y)(\dd \varphi(x,y))^2\,\d w^N(x,y)
   =\frac{1}{2N^2}\sum_{i=1}^N\sum_{j=1}^N W_{ij}^N B_{ij}(\Phi_j-\Phi_i)^2.
\]
Taking the supremum over piecewise constant $\varphi\in \LinftyzeroN(\I)$ is therefore equivalent to taking the supremum over $\Phi\in \R^N_0$, and we obtain
\[
   \Gkin^N[\QN[B]]=G_{\mathrm{kin}}^N[B].
\]
The metabolic term $\Fmet^N[b]$ reads
\begin{align*}
   \frac{1}{2\gamma} \iintII & b(x,y)^\gamma \left(\ell^N(x,y) \right)^{\gamma+1} \d w^N(x, y)\\
    &=
     \frac{1}{2\gamma} \sum_{i=1}^N \sum_{j=1}^N W_{ij}^N B_{ij}^\gamma \left( \L_{ij}^N \right)^{\gamma+1} \int_{\I_i^N}\int_{\I_j^N} \d x \, \d y \\
       &= \frac{1}{2\gamma N^2} \sum_{i=1}^N \sum_{j=1}^N W_{ij}^N B_{ij}^\gamma \left( \L_{ij}^N \right)^{\gamma+1},
\end{align*}
where we used the fact that, by construction, $b= \QN[B] \equiv B_{ij}$ on the patch $\I_i\times \I_j$,
and similarly $\ell^N = \QN[\L^N] \equiv \L^N_{ij}$, $w^N = \QN[W^N] \equiv W^N_{ij}$ on $\I_i\times \I_j$.
Consequently, we have
\[
   \Fmet^N[\QN[B]]=F_{\mathrm{met}}^N[B],
\]
and \eqref{eq:matrix-embedding} follows.
\end{proof}

\section{Proof of the \texorpdfstring{$\Gamma$-liminf}{\unichar{"0393}-liminf} inequality}\label{sec:liminf}

Here we prove part (i) of Theorem~\ref{thm:main} under the assumptions stated therein.

\begin{lemma}\label{lem:kinetic-liminf}
Let $\{b^N\}_{N\in\N}\subset L_+^1(\II)$ converge weakly in $L^1(\II)$ to $b\in L_+^1(\II)$. Then
\begin{equation}\label{eq:kinetic-liminf}
   \Gkin[b]\le \liminf_{N\to\infty} \Gkin^N[b^N].
\end{equation}
\end{lemma}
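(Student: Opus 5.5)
The plan is to use the dual formulation directly: $\Gkin[b]$ is a supremum of affine functionals of $b$, so it suffices to show, for each fixed test function $\varphi\in\Linftyzero(\I)$, that
\[
   \mathcal{J}_b[\varphi]\le \liminf_{N\to\infty}\Gkin^N[b^N],
\]
and then take the supremum over $\varphi$. The difficulty is that $\Gkin^N$ only allows piecewise constant test functions in $\LinftyzeroN(\I)$, and the kernel is $w^N$ rather than $w$. I will therefore test $\Gkin^N[b^N]$ with the projection $\varphi^N:=\ZN[\varphi]$. This function lies in $\LinftyzeroN(\I)$, since $\ZN$ preserves the mean, and it satisfies $\Norm{\varphi^N}_{L^\infty}\le\Norm{\varphi}_{L^\infty}$. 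By the definition of the supremum,
\[
   \Gkin^N[b^N]\ \ge\ 2\intI\sigma\varphi^N\,\d x-\half\iintII b^N(\dd\varphi^N)^2 w^N\,\d x\,\d y .
\]

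The source term is straightforward. Because $\ZN$ is self-adjoint on $L^2$ and $\ZN[\sigma]\to\sigma$ in $L^2(\I)$, we get $\intI\sigma\varphi^N = \intI \ZN[\sigma]\varphi \to \intI\sigma\varphi$. Alternatively, one can use $\varphi^N\to\varphi$ in $L^2$, which follows from the Lebesgue differentiation theorem or the density of continuous functions.

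The quadratic term is the main obstacle, because $b^N$ converges only weakly in $L^1$. The product $b^N g^N$, with $g^N:=(\dd\varphi^N)^2w^N$, converges to $\int b\,g$ with $g:=(\dd\varphi)^2w$ provided $g^N\to g$ in a sense compatible with weak $L^1$ convergence. I split
\[
   \iintII b^N g^N-\iintII b\,g=\iintII b^N(g^N-g)+\iintII (b^N-b)g .
\]
The second term tends to zero by weak $L^1$ convergence, since $g\in L^\infty(\II)$. For the first term, note that $g^N$ and $g$ are uniformly bounded by $4\Norm{\varphi}_\infty^2\max(1,\Norm{w}_\infty)$, and $g^N\to g$ in measure. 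This holds because $\varphi^N\to\varphi$ in $L^1$, hence $\dd\varphi^N\to\dd\varphi$ in $L^1(\II)$, and $w^N\to w$ in $L^1$ by \eqref{ass:A2}. A weakly convergent sequence in $L^1$ is uniformly integrable by Dunford--Pettis. Given $\eps>0$, choose $\delta>0$ so that $\int_E b^N<\eps$ for all $N$ whenever $|E|<\delta$. Split $\II$ into the set $\{|g^N-g|>\eta\}$, which has measure less than $\delta$ for large $N$, and its complement. This bounds the first term by $C\eps+\eta\sup_N\Norm{b^N}_{L^1}$, and the $L^1$ norms are bounded by uniform boundedness of weakly convergent sequences. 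Hence the quadratic terms converge.

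Combining these estimates gives $\mathcal{J}_b[\varphi]=\lim_N\big(2\intI\sigma\varphi^N-\half\iintII b^N g^N\big)\le\liminf_N\Gkin^N[b^N]$ for every $\varphi\in\Linftyzero(\I)$. Taking the supremum over $\varphi$ yields \eqref{eq:kinetic-liminf}, including the case $\Gkin[b]=\infty$. No positivity of $b$ is used, apart from the kernels being in $L^1_+$ so that all the integrals make sense.
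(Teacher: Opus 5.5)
Your proposal is correct and follows essentially the same route as the paper. The paper also tests $\Gkin^N[b^N]$ with $\ZN[\varphi]$ and handles the quadratic term with the same Dunford--Pettis and convergence-in-measure splitting, which it packages as Lemma~\ref{lem:gamma1} applied to $a^N=(\dd\varphi^N)^2w^N$. The only minor difference is the source term, which the paper treats by dominated convergence rather than by the self-adjointness of $\ZN$.
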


\begin{proof}
Fix $\varphi\in \Linftyzero(\I)$ and set $\varphi^N:=\ZN[\varphi]$. By the definition \eqref{eq:GNkin} of $\Gkin^N$,
\begin{equation}\label{eq:test-discrete-kin}
   \Gkin^N[b^N]\ge 2\intI \sigma(x) \,\varphi^N(x)\,\d x-\half\iintII b^N(x,y) \left(\dd \varphi^N(x,y) \right)^2\,\d w^N(x,y).
\end{equation}
By the Lebesgue differentiation theorem, $\varphi^N\to \varphi$ almost everywhere on $\I$.
Since $\sigma\in L^2_0(\I)$ and
\[
   \sup_{N\in\N} \Norm{\varphi^N}_{L^\infty(\I)} =  \sup_{N\in\N} \Norm{\ZN[\varphi]}_{L^\infty(\I)} \leq \Norm{\varphi}_{L^\infty(\I)},
\]
we have by dominated convergence,
\begin{equation}\label{eq:source-conv}
   \intI \sigma(x) \, \varphi^N(x)\,\d x \to \intI \sigma(x) \, \varphi(x)\,\d x \qquad \text{as } N \to \infty.
\end{equation}
Moreover, Lemma \ref{lem:projection-test} of the Appendix gives the uniform boundedness of $(\dd \varphi^N)^2 w^N$ in $L^\infty(\II)$, and
\[  
   (\dd \varphi^N)^2 w^N \to (\dd \varphi)^2 w \qquad \mbox{in } L^q(\II) \qquad\mbox{as } N\to\infty,
\]
for any $1 \leq q<\infty$,
which implies convergence in measure.
An application of Lemma \ref{lem:gamma1} of the Appendix with $a^N := \left(\dd \varphi^N \right)^2 w^N$ and
$a:= \left(\dd \varphi \right)^2 w$ yields
\begin{equation}\label{eq:quadratic-conv}
   \iintII b^N(x,y) \left(\dd \varphi^N(x,y) \right)^2\,\d w^N(x,y)
   \to
   \iintII b(x,y) \left(\dd \varphi(x,y) \right)^2\,\d w(x,y).
\end{equation}
Passing to the limit inferior as $N\to\infty$ in \eqref{eq:test-discrete-kin}, using \eqref{eq:source-conv} and \eqref{eq:quadratic-conv}, we infer
\[
   \liminf_{N\to\infty}\Gkin^N[b^N]\ge 2\intI \sigma(x)\,\varphi(x)\,\d x-\half\iintII b(x,y) \left(\dd \varphi(x,y)\right)^2\,\d w(x,y).
\]
Since $\varphi\in \Linftyzero(\I)$ was arbitrary, taking the supremum over $\varphi$ yields \eqref{eq:kinetic-liminf}.
\end{proof}

For the metabolic part of the energy we have the following result.

\begin{lemma}\label{lem:metabolic-liminf}
Let $\{b^N\}_{N\in\N}\subset L_+^1(\II)$ converge weakly in $L^1(\II)$ to $b\in L_+^1(\II)$. Then
\begin{equation}\label{eq:metabolic-liminf}
   \Fmet[b]\le \liminf_{N\to\infty}\Fmet^N[b^N].
\end{equation}
\end{lemma}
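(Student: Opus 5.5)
We prove the metabolic liminf by a weak lower semicontinuity argument, handling the varying weights $(\ell^N)^{\gamma+1} w^N$ through a duality (Fenchel) representation of the convex integrand.

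Since $w$ is $0$--$1$ valued and $w^N$ is $0$--$1$ valued as well, write $c^N := (\ell^N)^{\gamma+1} w^N$ and $c := \ell^{\gamma+1} w$. By \eqref{ass:L1}, \eqref{ass:A2} and \eqref{ass:L2} these are uniformly bounded in $L^\infty(\II)$ and satisfy $c^N\to c$ in $L^q(\II)$ for every $q<\infty$, hence in measure. Moreover $c^N, c\ge 0$.

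For $\gamma>1$, use the pointwise Young inequality: for $s\ge0$, $t\ge 0$,
\[
   \frac{1}{\gamma}s^\gamma \;\ge\; s\,t - \frac{\gamma-1}{\gamma}\, t^{\gamma/(\gamma-1)}.
\]
Fix a bounded measurable $\psi\ge 0$ on $\II$ and apply this with $s = b^N (c^N)^{1/\gamma}$ and $t = \psi$ (for $\gamma=1$ the functional is linear and the argument below is direct). This gives
\[
   \Fmet^N[b^N] \ge \half \iintII b^N (c^N)^{1/\gamma}\psi \,\d x\,\d y - \frac{\gamma-1}{2\gamma}\iintII \psi^{\gamma/(\gamma-1)} \,\d x\,\d y.
\]

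Now pass to the limit. The functions $a^N := (c^N)^{1/\gamma}\psi$ are uniformly bounded in $L^\infty$ and converge in measure to $a:= c^{1/\gamma}\psi$, because $t\mapsto t^{1/\gamma}$ is uniformly continuous on bounded sets. This is exactly the setting of Lemma~\ref{lem:gamma1} of the Appendix, used already in Lemma~\ref{lem:kinetic-liminf}, which gives $\iintII b^N a^N \to \iintII b\, a$. The first term therefore converges, and
\[
   \liminf_{N\to\infty}\Fmet^N[b^N] \ge \half\iintII \Big( b\, c^{1/\gamma}\psi - \frac{\gamma-1}{\gamma}\psi^{\gamma/(\gamma-1)}\Big)\,\d x\,\d y .
\]

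Finally, take the supremum over bounded $\psi\ge 0$. Choose the pointwise optimizer $\psi = (b c^{1/\gamma})^{\gamma-1}$, truncated as $\psi_k := \min\{k, (b c^{1/\gamma})^{\gamma-1}\}$. The integrand then equals $\tfrac1\gamma (bc^{1/\gamma})^\gamma = \tfrac1\gamma b^\gamma c$ where the truncation is inactive, and is nonnegative everywhere. Letting $k\to\infty$ by monotone convergence recovers $\Fmet[b]$, including the case $\Fmet[b]=\infty$.

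The main obstacle is the step justifying convergence of the pairing $\iintII b^N a^N$ when $b^N$ converges only weakly in $L^1$ and $a^N$ only in measure. This is precisely what Lemma~\ref{lem:gamma1} provides: the weak $L^1$ convergence gives equi-integrability of $b^N$ via Dunford--Pettis, which controls the set where $a^N$ and $a$ differ. The remaining steps are routine.
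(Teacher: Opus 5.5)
Your proof is correct. Its skeleton agrees with the paper's. In both, the only delicate step is passing to the limit in a pairing $\iintII b^N a^N\,\d x\,\d y$, where $a^N$ is $(\ell^N)^{\frac{\gamma+1}{\gamma}}w^N$ (your $(c^N)^{1/\gamma}$) times a bounded test function, and both do this with Lemma~\ref{lem:gamma1}. The two arguments differ in how lower semicontinuity is obtained.

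The paper first turns Lemma~\ref{lem:gamma1} into the weak convergence $b^N(\ell^N)^{\frac{\gamma+1}{\gamma}}w^N\rightharpoonup b\,\ell^{\frac{\gamma+1}{\gamma}}w$ in $L^1(\II)$. It then rewrites $\Fmet[b]=\frac{1}{2\gamma}\Norm{b\,\ell^{\frac{\gamma+1}{\gamma}}w}^\gamma_{L^\gamma(\II)}$ using $w^\gamma=w$. Finally it appeals to the separate Lemma~\ref{lem:weakLSC}, proved via H\"older's inequality, density of $L^\infty$ in $L^{\gamma'}$, and the $L^\gamma$--$L^{\gamma'}$ duality.

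You instead dualize at the level of the integrand. The pointwise Young inequality gives an affine minorant in $b^N$, and the truncated pointwise maximizer $\psi_k$ recovers $\Fmet[b]$, including the value $+\infty$, with no Riesz-type representation.

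Your route is self-contained and slightly more general. It never uses that $w^N$ and $w$ are $0$--$1$ valued, so your opening remark about this is not needed. It is also the standard Fenchel-duality argument, which adapts to other convex integrands. The paper's route is more modular: it isolates a reusable lower semicontinuity lemma for $L^\gamma$ norms under weak $L^1$ convergence.

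One small point you left unchecked: the monotone convergence step is legitimate. Since $t\mapsto st-\frac{\gamma-1}{\gamma}t^{\gamma/(\gamma-1)}$ is nondecreasing on $[0,s^{\gamma-1}]$, your truncated integrands increase in $k$. Alternatively, Fatou's lemma suffices, since they are nonnegative and converge pointwise.
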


\begin{proof}
We first prove that
\begin{equation}   \label{eq:first}
   b^N \left(\ell^N\right)^\frac{\gamma+1}{\gamma} w^N \;\rightharpoonup\; b \, \ell^\frac{\gamma+1}{\gamma} w
   \qquad
   \mbox{weakly in } L^1(\II).
\end{equation}
Indeed, fixing any test function $\varphi\in L^\infty(\II)$ and denoting
\[
   a^N := \left(\ell^N\right)^\frac{\gamma+1}{\gamma} w^N \varphi,
   \qquad
   a := \ell^\frac{\gamma+1}{\gamma} w \, \varphi,
\]
we have that $\{a^N\}_{N \in \N}$ is uniformly bounded in $L^\infty(\II)$ and converges to $a\in L^\infty(\II)$ in $L^q(\II)$ with any $1 \leq q<\infty$.
This implies $a^N\to a$ in measure, and Lemma \ref{lem:gamma1} of the Appendix gives
\[
   \iintII b^N a^N\,\d x \, \d y \to \iintII b \, a\,\d x \, \d y \qquad\mbox{as } N\to\infty.
\]
As the choice of the test function $\varphi\in L^\infty(\II)$ is arbitrary, we obtain \eqref{eq:first}.

Then, with the definitions \eqref{eq:FNmet} and \eqref{eq:Gmet}, and since $w^\gamma=w$, $(w^N)^\gamma=w^N$, we deduce
\begin{align*}
 \Fmet[b] &= \frac{1}{2\gamma} \Norm{b \, \ell^\frac{\gamma+1}{\gamma} w}^\gamma_{L^\gamma(\II)}   \\
   &\leq \liminf_{N\to\infty} \frac{1}{2\gamma} \Norm{b^N (\ell^N)^\frac{\gamma+1}{\gamma} w^N}^\gamma_{L^\gamma(\II)} \\
   &= \liminf_{N\to\infty} \Fmet^N[b^N],
\end{align*}
where we used Lemma \ref{lem:weakLSC} of the Appendix.
\end{proof}

The claim of Theorem~\ref{thm:main}(i) follows immediately by
combining the results of Lemma~\ref{lem:kinetic-liminf} and Lemma~\ref{lem:metabolic-liminf},
\begin{align*}
\Gcont[b]& =\Gkin[b]+\Fmet[b] \\
  & \le \liminf_{N\to\infty}\Gkin^N[b^N]+\liminf_{N\to\infty}\Fmet^N[b^N] \\
   & \le \liminf_{N\to\infty}\Gcont^N[b^N].
\end{align*}

\section{Proof of the \texorpdfstring{$\Gamma$-limsup}{\unichar{"0393}-limsup} inequality}\label{sec:limsup}

In this section we prove Theorem~\ref{thm:main}(ii).
The argument is based on a uniformly positive regularization of the conductivity kernels
and on the recovery estimate along the canonical approximations $b^N=\ZN[b]$
on the uniformly positive class.

\begin{lemma}\label{lem:GNkin}
Fix $r>0$ and let $b\in L^1_r(\II)$.
Then the sequence $b^N = \ZN[b]$ converges in $L^1(\II)$ to $b$ as $N\to\infty$, and
\[
   \limsup_{N\to\infty}\Gkin^N[b^N] \le \Gkin[b].
\]
\end{lemma}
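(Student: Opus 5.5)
We first show $b^N=\ZN[b]\to b$ in $L^1(\II)$. The operators $\ZN$ are contractions on $L^1(\II)$, and they reproduce functions that are piecewise constant on the dyadic grids $\I_i^{2^k}\times\I_j^{2^k}$ whenever $2^k$ divides $N$. For general $N$, we argue via continuous functions, which are dense in $L^1(\II)$ and for which $\ZN[g]\to g$ uniformly by uniform continuity. A standard $\eps/3$ argument then gives $\ZN[b]\to b$ in $L^1(\II)$. Moreover, $\ZN$ preserves symmetry and lower bounds, so $b^N\ge r$ a.e.

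The key tool for the $\limsup$ is discrete uniform ellipticity. For $\varphi=\QN[\Phi]\in\LinftyzeroN(\I)$, assumption \eqref{ass:A1} rewritten in integral form reads
\[
\iintII(\dd\varphi)^2\,\d w^N \ge \lambda\Norm{\varphi}_{L^2(\I)}^2 .
\]
Since $b^N\ge r$, the concave quadratic functional in \eqref{eq:GNkin} is coercive on the finite-dimensional space $\LinftyzeroN(\I)$. Hence the supremum is attained at a unique maximizer $p^N\in\LinftyzeroN(\I)$. Testing its Euler--Lagrange equation with $p^N$ (as in the proof of Lemma~\ref{lem:discrete-compatibility}) gives
\[
\Gkin^N[b^N]=\intI\sigma p^N\,\d x=\half\iintII b^N(\dd p^N)^2\,\d w^N .
\]
The same equation yields $\Norm{p^N}_{L^2}\le \frac{2}{\lambda r}\Norm{\sigma}_{L^2}$. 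We pass to a subsequence realizing the $\limsup$ and extract a further subsequence with $p^N\rightharpoonup p$ weakly in $L^2_0(\I)$. Then
\[
\Gkin^N[b^N]=2\intI\sigma p^N\,\d x-\half\iintII b^N(\dd p^N)^2\,\d w^N .
\]
The first term converges to $2\intI\sigma p\,\d x$. It therefore suffices to prove the lower semicontinuity
\[
\half\iintII b(\dd p)^2\,\d w\le\liminf_{N\to\infty}\half\iintII b^N(\dd p^N)^2\,\d w^N,
\]
together with the inequality $2\intI\sigma p\,\d x-\half\iintII b(\dd p)^2\,\d w\le\Gkin[b]$.

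The lower semicontinuity is the main obstacle, because $b$ is only integrable and $p^N$ converges only weakly. We plan to handle it as follows. Since $p^N\rightharpoonup p$ in $L^2(\I)$, we get $\dd p^N\rightharpoonup\dd p$ weakly in $L^2(\II)$. Since $b^N\to b$ in $L^1(\II)$ and $w^N\to w$ in $L^1(\II)$ with uniform bounds, we have $\sqrt{b^Nw^N}\to\sqrt{bw}$ strongly in $L^2(\II)$, using $|\sqrt a-\sqrt c|^2\le|a-c|$. Consequently $\sqrt{b^Nw^N}\,\dd p^N\rightharpoonup\sqrt{bw}\,\dd p$ weakly in $L^1(\II)$, as the product of a strongly and a weakly convergent sequence in $L^2$. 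The same sequence is bounded in $L^2(\II)$ by the energy identity. Therefore its weak $L^2$ limit exists along a subsequence and must coincide with $\sqrt{bw}\,\dd p$. Weak lower semicontinuity of the $L^2$ norm then gives the claim.

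Finally, $p\in L^2_0(\I)$ need not be bounded, so we show that the value at $p$ is still at most $\Gkin[b]$. We use the truncations $T_kp:=\max\{-k,\min\{k,p\}\}$ and set $\varphi_k:=T_kp-\intI T_kp\,\d x\in\Linftyzero(\I)$. Then $|\dd\varphi_k|\le|\dd p|$ and $\dd\varphi_k\to\dd p$ pointwise. Since $b(\dd p)^2w$ is integrable by the previous step, dominated convergence gives $\iintII b(\dd\varphi_k)^2\,\d w\to\iintII b(\dd p)^2\,\d w$. Also $\varphi_k\to p$ in $L^2(\I)$, so the source term converges. Hence
\[
2\intI\sigma p\,\d x-\half\iintII b(\dd p)^2\,\d w=\lim_{k\to\infty}\mathcal{J}_b[\varphi_k]\le\Gkin[b],
\]
which concludes the proof.
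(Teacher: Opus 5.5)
Your proof is correct and follows the paper's argument essentially step by step. Both use discrete coercivity from \eqref{ass:A1} and $b^N\ge r$, an attained maximizer $p^N$ with a uniform $L^2$ bound, a weakly convergent subsequence, lower semicontinuity of the weighted energy, and bounded zero-mean truncations of $p$ to compare with $\Gkin[b]$.

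There are two local differences, both valid. First, you get the sharper bound $\frac{2}{\lambda r}\Norm{\sigma}_{L^2}$ from the Euler--Lagrange identity, whereas the paper compares $\mathcal{J}^N[p^N]$ with $\mathcal{J}^N[0]$. Second, for the lower semicontinuity you use $|\sqrt a-\sqrt c|^2\le|a-c|$ to get $\sqrt{b^Nw^N}\to\sqrt{bw}$ strongly in $L^2(\II)$, combined with the energy bound. The paper instead truncates $b^N$ at level $M$ to work with $L^\infty$-bounded weights and then lets $M\to\infty$ by monotone convergence; your route avoids the truncation.
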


\begin{proof}
The strong convergence $b^N\to b$ in $L^1(\II)$ is a standard result in approximation theory.
Moreover, we obviously have $b^N\in L^1_r(\II)$ for all $N\in\N$.

We define, for $\varphi\in \LinftyzeroN(\I)$,
\[
   \mathcal{J}^N[\varphi]
   :=
   2\intI \sigma(x) \, \varphi(x)\,\d x
   -\frac12\iintII b^N(x,y)(\dd\varphi(x,y))^2\,\d w^N(x,y).
\]
Therefore,
\[
   \Gkin^N[b^N]=\sup_{\varphi\in\LinftyzeroN(\I)} \mathcal{J}^N[\varphi],
   \qquad
   \Gkin[b]=\sup_{\psi\in\Linftyzero(\I)} \mathcal{J}_{b}[\psi],
\]
with $\mathcal{J}_{b}[\psi]$ defined in \eqref{def:Jb}.
The discrete spectral gap assumption \eqref{ass:A1} gives
\[
   \iintII (\dd \varphi(x,y))^2\,\d w^N(x,y)
   \ge
   \lambda \Norm{\varphi}_{L^2(\I)}^2
   \qquad
   \mbox{for all } \varphi\in\LinftyzeroN(\I),
\]
and using $b^N\ge r$, we have
\[
   \iintII b^N(x,y) (\dd \varphi(x,y))^2\,\d w^N(x,y)
   \ge
   r\lambda \Norm{\varphi}_{L^2(\I)}^2
   \qquad
   \mbox{for all } \varphi\in\LinftyzeroN(\I).
\]
Consequently, $\mathcal{J}^N$ is a strictly concave coercive functional
on the finite-dimensional space $\LinftyzeroN(\I)$, and the supremum in the
definition of $\Gkin^N[b^N]$ is attained. Let $p^N\in\LinftyzeroN(\I)$ be the
maximizer,
\[
   \Gkin^N[b^N]= \mathcal{J}^N[p^N].
\]
Then we have
\[
   0 = \mathcal{J}^N[0]
   \le
   \mathcal{J}^N[p^N]
   \le
   2\Norm{\sigma}_{L^2(\I)} \Norm{p^N}_{L^2(\I)}
   -\frac{r\lambda}{2}\Norm{p^N}_{L^2(\I)}^2,
\]
which gives the uniform bound
\[
   \Norm{p^N}_{L^2(\I)} \leq \frac{4}{r\lambda} \Norm{\sigma}_{L^2(\I)}.
\]
After an eventual extraction of a subsequence, we have
$p^N \rightharpoonup p$ weakly in $L^2(\I)$
for some $p\in L^2_0(\I)$.
We claim that
\begin{equation}\label{eq:weighted-lsc}
   \iintII b(x,y)(\dd p(x,y))^2\,\d w(x,y)
   \le
   \liminf_{N\to\infty}
   \iintII b^N(x,y)(\dd p^N(x,y))^2\,\d w^N(x,y).
\end{equation}
Indeed, fix $M>0$ and define
\[
   a_M^N(x,y):=\min\big\{b^N(x,y),M\big\}\,w^N(x,y),
   \qquad
   a_M(x,y):=\min\big\{b(x,y),M\big\}\,w(x,y).
\]
Since $b^N\to b$ in $L^1(\II)$ and $\min\big\{b^N,M\big\}$ is uniformly bounded in $L^\infty(\II)$, we have
\[
   \min\{b^N,M\} \to \min\{b,M\}
   \qquad\mbox{in }L^q(\II) \qquad 
   \mbox{for every } 1 \leq q<\infty.
\]
With $w^N\to w$ in $L^q(\II)$ for every $1 \leq q<\infty$, we readily have
\[
   a_M^N\to a_M
   \qquad\mbox{in }L^q(\II) \qquad 
   \mbox{for every } 1 \leq q<\infty.
\]
Then the weak convergence of $\dd p^N$ to $\dd p$ in $L^2(\II)$ 
combined with the uniform boundedness of $a_M^N$ in $L^\infty(\II)$
implies that
\[
   \sqrt{a_M^N} \, \dd p^N
   \rightharpoonup
   \sqrt{a_M} \, \dd p
   \qquad\mbox{weakly in }L^2(\II).
\]
Hence, by weak lower semicontinuity of the $L^2$-norm,
\[
   \iintII a_M(x,y)(\dd p(x,y))^2\,\d x\d y
   \le
   \liminf_{N\to\infty}
   \iintII a_M^N(x,y)(\dd p^N(x,y))^2\,\d x\d y.
\]
Since, by definition, $a_M^N\le b^Nw^N$, we get
\begin{align*}
   \iintII & \min\{b(x,y),M\}(\dd p(x,y))^2\,\d w(x,y) \\
   & \le
   \liminf_{N\to\infty}
   \iintII b^N(x,y) (\dd p^N(x,y))^2\,\d w^N(x,y).
\end{align*}
Letting $M\to\infty$ and using the monotone convergence theorem yields
\eqref{eq:weighted-lsc}.

Finally, since
$p^N\rightharpoonup p$ weakly in $L^2(\I)$ and $\sigma\in L^2_0(\I)$,
\[
   \intI \sigma(x) \, p^N(x)\,\d x \to \intI \sigma(x) \, p(x)\,\d x,
\]
and using \eqref{eq:weighted-lsc}, we obtain
\begin{align*}
  \limsup_{N\to\infty} & \ \Gkin^N[b^N] \\
   &=
   \limsup_{N\to\infty}
   \left(
      2\intI \sigma(x) p^N(x) \,\d x
      -\frac12\iintII b^N(x,y) (\dd p^N(x,y))^2\,\d w^N(x,y)
   \right)
   \\
   &\le
   2\intI \sigma(x) \, p(x)\,\d x
   -\frac12\iintII b(x,y)\big(\dd p(x,y)\big)^2\,\d w(x,y) \\
   & \leq \Gkin[b],
\end{align*}
where the last inequality follows from the definition \eqref{eq:Gkin} of $\Gkin[b]$,
approximating $p\in L^2_0(\I)$ by a sequence of bounded zero-mean truncations.
\end{proof}

\begin{lemma}\label{lem:GNmet}
Let $\gamma\ge1$. Fix $r>0$ and let $b\in L^\gamma_r(\II)$.
Then the sequence $b^N = \ZN[b]$ converges to $b$ in $L^\gamma(\II)$ as $N\to\infty$, and
\[
   \lim_{N\to\infty}\Fmet^N[b^N] = \Fmet[b].
\]
\end{lemma}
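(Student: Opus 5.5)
The proof splits into two steps: strong convergence of $b^N=\ZN[b]$ in $L^\gamma(\II)$, and then a dominated-convergence-type argument for the metabolic integrand.

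\emph{Step 1: convergence of $\ZN[b]$.} Since $\ZN$ is an averaging operator on squares of side $1/N$, Jensen's inequality gives $\Norm{\ZN[u]}_{L^\gamma(\II)}\le \Norm{u}_{L^\gamma(\II)}$ for every $u\in L^\gamma(\II)$. For continuous $u$, uniform continuity gives $\ZN[u]\to u$ uniformly. By density of continuous functions in $L^\gamma(\II)$ ($\gamma<\infty$), the standard $\eps/3$ argument yields $\ZN[b]\to b$ in $L^\gamma(\II)$. The positivity $b^N\ge r$ and the symmetry of $b^N$ are preserved, although they are not needed for this part.

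\emph{Step 2: convergence of the metabolic term.} Write
\[
\Fmet^N[b^N]=\frac{1}{2\gamma}\iintII (b^N)^\gamma (\ell^N)^{\gamma+1}w^N\,\d x\,\d y.
\]
Set $f^N:=(b^N)^\gamma$ and $f:=b^\gamma$. Strong convergence $b^N\to b$ in $L^\gamma$ implies $f^N\to f$ in $L^1(\II)$. This follows, for instance, from the mean value estimate
\[
|s^\gamma-t^\gamma|\le \gamma|s-t|(s^{\gamma-1}+t^{\gamma-1})
\]
combined with Hölder's inequality with exponents $\gamma$ and $\gamma/(\gamma-1)$ (for $\gamma=1$ it is trivial). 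Next set $g^N:=(\ell^N)^{\gamma+1}w^N$ and $g:=\ell^{\gamma+1}w$. These are uniformly bounded in $L^\infty$ by \eqref{ass:L1} and since $0\le w^N\le1$. Moreover $g^N\to g$ in measure, because $\ell^N\to\ell$ and $w^N\to w$ in $L^1$ by \eqref{ass:L2} and \eqref{ass:A2}, and the map $(s,t)\mapsto s^{\gamma+1}t$ is continuous and bounded on bounded sets. Passing to a.e.-convergent subsequences, one also sees $|\ell|\le1$ and $0\le w\le 1$ a.e.

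\emph{Conclusion.} Split
\[
\iintII f^Ng^N-\iintII fg=\iintII (f^N-f)g^N+\iintII f(g^N-g).
\]
The first term is bounded by $\Norm{f^N-f}_{L^1}\to0$, since $|g^N|\le 1$. The second term tends to zero by dominated convergence along any a.e.-convergent subsequence, with dominating function $2f\in L^1$. Because every subsequence has a further subsequence along which the difference tends to zero, the whole sequence converges. Alternatively, Lemma~\ref{lem:gamma1} of the Appendix can be invoked directly with $b^N$ replaced by $f^N$, since $f^N\to f$ in $L^1$ and hence weakly in $L^1$.

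The only mildly delicate point is Step 1, i.e.\ the $L^\gamma$-stability and approximation property of the cell-average projection. This is standard, and I expect no real obstacle.
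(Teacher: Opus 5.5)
Your proof is correct and follows essentially the same route as the paper. The paper also relies on the standard $L^\gamma$ approximation property of $\ZN$, deduces $(b^N)^\gamma\to b^\gamma$ in $L^1(\II)$, and concludes with the same product splitting via Lemma~\ref{lem:strongConv}, which treats the term $\iintII f(g^N-g)$ by truncating $f$ at a level $K$ instead of your subsequence and dominated-convergence argument. You additionally supply justifications the paper leaves implicit: Jensen's inequality plus density for $\ZN$, and the mean value estimate with H\"older for $(b^N)^\gamma$.
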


\begin{proof}
The strong convergence $b^N\to b$ in $L^\gamma(\II)$ is a standard approximation result. 

According to the definition \eqref{eq:FNmet}, we have
\begin{align*}
 \lim_{N\to\infty} \Fmet^N[b^N]
   &= \lim_{N\to\infty}  \frac{1}{2\gamma} \iintII \big( b^N(x,y) \big)^\gamma \big( \ell^N(x,y) \big)^{\gamma+1} \d w^N(x, y)  \\
   &= \frac{1}{2\gamma}  \iintII \big( b(x,y) \big)^\gamma \big( \ell(x,y) \big)^{\gamma+1} \d w(x, y)  \\
   &= \Fmet[b],
\end{align*}
using Lemma~\ref{lem:strongConv} with $(b^N)^\gamma\to b^\gamma$ in $L^1(\II)$, with
\[
   (\ell^N)^{\gamma+1}w^N \to \ell^{\gamma+1}w
   \qquad\text{in }L^q(\II)
   \qquad\text{for every }1\le q<\infty,
\]
and with the uniform boundedness of $(\ell^N)^{\gamma+1}w^N$ in $L^\infty(\II)$.
\end{proof}

The following lemma establishes the continuity of $\Gcont$ with respect to the positive regularization of the kernel.

\begin{lemma}\label{lem:delta-continuity}
Let $b\in L_+^1(\II)$ be such that $\Gcont[b]< \infty$, and for $r>0$ define $b^r:=b+r$.
Then
\[
   \Gcont[b^r]\to \Gcont[b]
   \qquad \text{as } r\to 0^+.
\]
\end{lemma}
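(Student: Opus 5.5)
We prove convergence separately for the kinetic and metabolic parts. Since $\Gcont[b]<\infty$, both $\Gkin[b]<\infty$ and $\Fmet[b]<\infty$.

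\emph{Kinetic part.} For fixed $\varphi\in\Linftyzero(\I)$ the map $r\mapsto \mathcal{J}_{b^r}[\varphi]$ is nonincreasing in $r$, because $b^r=b+r$ is increasing in $r$ and the quadratic term enters with a minus sign. Taking suprema, $\Gkin[b^r]\le\Gkin[b]$ for all $r>0$, so
\[
   \limsup_{r\to0^+}\Gkin[b^r]\le\Gkin[b].
\]
For the reverse inequality, fix $\varphi\in\Linftyzero(\I)$. Since $(\dd\varphi)^2 w\in L^\infty(\II)$,
\[
   \mathcal{J}_{b^r}[\varphi]=\mathcal{J}_b[\varphi]-\frac r2\iintII(\dd\varphi)^2\,\d w\to\mathcal{J}_b[\varphi]
   \qquad\text{as } r\to0^+ .
\]
Hence $\liminf_{r\to0^+}\Gkin[b^r]\ge\mathcal{J}_b[\varphi]$ for every $\varphi$, and taking the supremum over $\varphi$ gives $\liminf_{r\to0^+}\Gkin[b^r]\ge\Gkin[b]$. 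This is just lower semicontinuity of a supremum of continuous affine functions combined with monotonicity. So $\Gkin[b^r]\to\Gkin[b]$ with no further work, and this part also holds without finiteness.

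\emph{Metabolic part.} We have
\[
   \Fmet[b^r]=\frac1{2\gamma}\iintII (b+r)^\gamma\ell^{\gamma+1}\,\d w .
\]
Where $\ell\ge0$ (edge lengths are nonnegative, and $\ell\le 1$ a.e.\ by \eqref{ass:L1}--\eqref{ass:L2}), the integrand decreases pointwise to $b^\gamma\ell^{\gamma+1}w$ as $r\downarrow0$. For $r\le1$ it is dominated by
\[
   (b+1)^\gamma\ell^{\gamma+1}w\le 2^{\gamma-1}\big(b^\gamma+1\big)\ell^{\gamma+1}w .
\]
This bound is integrable: $b^\gamma\ell^{\gamma+1}w\in L^1(\II)$ because $\Fmet[b]<\infty$, and $\ell^{\gamma+1}w\in L^\infty(\II)$. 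Dominated (or monotone) convergence then gives $\Fmet[b^r]\to\Fmet[b]$.

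Adding the two limits gives the claim. The only delicate point is the metabolic domination: $b$ itself need not lie in $L^\gamma(\II)$, only $b^\gamma\ell^{\gamma+1}w$ need be integrable. The bound must therefore keep the weight $\ell^{\gamma+1}w$ rather than use $\|b\|_{L^\gamma}$, and it also requires the sign of $\ell$, which follows from positivity of the lengths $\L^N_{ij}$ and $L^1$ convergence.
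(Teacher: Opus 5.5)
Your proof is correct and follows the paper's argument. For the kinetic part you use monotonicity in $r$ together with testing against a fixed $\varphi$ (the paper phrases this with an $\eps$-near-maximizer, which is the same thing). For the metabolic part you use dominated convergence with the bound $2^{\gamma-1}(1+b^\gamma)\ell^{\gamma+1}w$, which is integrable precisely because $\Fmet[b]<\infty$.
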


\begin{proof}
Since $b^r\ge b$ on $\II$, we have for any $\varphi\in \Linftyzero(\I)$,
\[
   \mathcal{J}_{b^r}[\varphi] \le \mathcal{J}_b[\varphi],
\]
with $\mathcal{J}_b[\varphi]$ defined in \eqref{def:Jb}.
Taking the supremum over $\varphi\in \Linftyzero(\I)$ gives
\[
   \Gkin[b^r]\le \Gkin[b].
\]
Fix $\eps>0$ and choose $\varphi_\eps\in \Linftyzero(\I)$ such that
\[
   \Gkin[b]\le  \mathcal{J}_b[\varphi_\eps] +\eps.
\]
Then
\[
   \Gkin[b^r]\ge \mathcal{J}_{b^r}[\varphi_\eps]
   = \mathcal{J}_b[\varphi_\eps] - \frac{r}{2}\iintII \big(\dd \varphi_\eps(x,y)\big)^2\,\d w(x,y),
\]
hence
\[
   \Gkin[b^r]\ge \Gkin[b]-\eps-\frac{r}{2}\iintII \big(\dd \varphi_\eps\big)^2\,\d w.
\]
Letting $r\to 0^+$ and then $\eps\to 0^+$ yields the convergence of $\Gkin[b^r]$ to $\Gkin[b]$.

Moreover, $b^r\to b$ a.e.\ on $\II$ as $r\to0^+$, and for $r<1$ we have
\[
   (b^r)^\gamma \ell^{\gamma+1}w
   \le 2^{\gamma-1}(1+b^\gamma)\ell^{\gamma+1}w.
\]
The right-hand side is integrable, since $\Fmet[b]\le \Gcont[b]<+\infty$ by assumption and $\ell^{\gamma+1}w\in L^\infty(\II)$. Therefore, the dominated convergence theorem gives
\[
   \Fmet[b^r]\to \Fmet[b].
\]
Hence $\Gcont[b^r]\to \Gcont[b]$.
\end{proof}

\medskip

\begin{proof}[Proof of Theorem~\ref{thm:main}(ii)]
If $\Gcont[b]=\infty$, there is nothing to prove; for instance, we may take $b^N:=b$ for every $N\in\N$. We therefore assume that
$\Gcont[b]<\infty$.

For $k\in\N$, set
\[
   b^{(k)}:=b+\frac1k \in L^\gamma_{1/k}(\II).
\]
By Lemma~\ref{lem:delta-continuity},
\begin{equation}\label{eq:delta-cont-main}
   \Gcont[b^{(k)}]\to \Gcont[b]
   \qquad \text{as } k\to\infty .
\end{equation}
Moreover, for each fixed $k\in\N$, Lemmas \ref{lem:GNkin} and \ref{lem:GNmet} give
\[  
   \limsup_{N\to\infty}
   \Gcont^N[\ZN[b^{(k)}]]
   \le
   \Gcont[b^{(k)}].
\]
Hence, for each $k\in\N$, there exists $N_k\in\N$ such that for all
$N\ge N_k$,
\begin{equation}\label{eq:block-bound-1}
   \Gcont^N[\ZN[b^{(k)}]]
   \le
   \Gcont[b^{(k)}]+\frac1k .
\end{equation}
Passing to larger values if necessary, we may assume that the sequence
$\{N_k\}_{k\in\N}$ is strictly increasing.

For the finitely many indices $N<N_1$, choose $b^N\in L^\gamma_+(\II)$ arbitrarily. For $N\ge N_1$, let $k(N)\in\N$ be the unique index such that
\[
   N_{k(N)}\le N< N_{k(N)+1},
\]
and define
\begin{equation}\label{eq:diagonal-sequence}
   b^N:=\ZN[b^{(k(N))}] \in L^\gamma_+(\II).
\end{equation}
Then $b^N\to b$ in $L^\gamma(\II)$ as $N\to\infty$. Indeed,
\[
   \Norm{b^N-b}_{L^\gamma(\II)}
   \le
   \Norm{\ZN[b^{(k(N))}-b]}_{L^\gamma(\II)}
   +
   \Norm{\ZN[b]-b}_{L^\gamma(\II)}.
\]
While the second term vanishes in the limit $N\to\infty$ by the standard approximation properties of $\ZN$, for the first term we have
\[
   \Norm{\ZN[b^{(k(N))} - b]}_{L^\gamma(\II)} \leq \Norm{b^{(k(N))} - b}_{L^\gamma(\II)} = \frac{1}{k(N)} \to 0,
\]
as $N\to\infty$.

Now, by \eqref{eq:block-bound-1}, for every $N\ge N_1$,
\[
   \Gcont^N[b^N]
   =
   \Gcont^N[\ZN[b^{(k(N))}]]
   \le
   \Gcont[b^{(k(N))}]+\frac1{k(N)}.
\]
Therefore
\[
   \limsup_{N\to\infty}\Gcont^N[b^N]
   \le
   \limsup_{N\to\infty}\left(\Gcont[b^{(k(N))}]+\frac{1}{k(N)}\right).
\]
Since $k(N)\to\infty$ as $N\to\infty$, \eqref{eq:delta-cont-main} gives
\[
   \Gcont[b^{(k(N))}]\to \Gcont[b],
   \qquad
   \frac{1}{k(N)}\to0 \qquad \text{as } N \to \infty.
\]
Hence
\[
   \limsup_{N\to\infty}\Gcont^N[b^N]\le \Gcont[b],
\]
which completes the proof.
\end{proof}

\appendix
\section{Auxiliary convergence results}\label{appendix}

\begin{lemma}\label{lem:projection-test}
Let $v\in \Linftyzero(\I)$ and set $v^N:=\ZN[v]$. Then $v^N\in \LinftyzeroN(\I)$, $\Norm{v^N}_{L^\infty(\I)}\le \Norm{v}_{L^\infty(\I)}$, and
\begin{equation*}
   v^N\to v \qquad \text{in }L^q(\I) \qquad \text{for every } 1\le q<\infty.
\end{equation*}
Moreover,
\begin{equation*}
   (\dd v^N)^2 w^N \to (\dd v)^2 w
   \qquad \text{in }L^{q}(\II) \qquad \text{for every } 1\le q<\infty.
\end{equation*}
\end{lemma}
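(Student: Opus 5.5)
We verify the three claims about $v^N=\ZN[v]$ in turn: membership and the sup bound, convergence of $v^N$, and convergence of the product $(\dd v^N)^2w^N$.

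\emph{Step 1: $v^N\in\LinftyzeroN(\I)$ and the $L^\infty$ bound.} By construction $v^N$ is constant on each $\I_i^N$. Summing the cell averages gives
\[
\intI v^N\,\d x=\sum_{i=1}^N\int_{\I_i^N}v\,\d s=\intI v\,\d x=0 .
\]
Each value of $v^N$ is an average of $v$ over a cell, so $|v^N|\le\Norm{v}_{L^\infty(\I)}$.

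\emph{Step 2: $v^N\to v$ in $L^q(\I)$.} By the Lebesgue differentiation theorem for dyadic-type partitions (or approximation by continuous functions), $v^N\to v$ almost everywhere; alternatively one can use $L^1$ convergence together with density of continuous functions and the contractivity of $\ZN$ on $L^1$. Since $|v^N-v|\le 2\Norm{v}_{L^\infty}$, dominated convergence gives convergence in every $L^q$ with $1\le q<\infty$.

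\emph{Step 3: convergence of the product.} Since $\dd v^N(x,y)=v^N(y)-v^N(x)$, we have
\[
\Norm{\dd v^N-\dd v}_{L^q(\II)}\le 2\Norm{v^N-v}_{L^q(\I)}\to0,
\]
and $\dd v^N$ is bounded by $2\Norm{v}_{L^\infty}$. It follows that
\[
(\dd v^N)^2-(\dd v)^2=(\dd v^N-\dd v)(\dd v^N+\dd v)
\]
tends to $0$ in $L^q(\II)$, and the whole family is bounded by $4\Norm{v}_{L^\infty}^2$. Now split
\[
(\dd v^N)^2w^N-(\dd v)^2w=\big((\dd v^N)^2-(\dd v)^2\big)w^N+(\dd v)^2(w^N-w).
\]
The first term tends to $0$ in $L^q$ because $0\le w^N\le1$. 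The second is bounded in $L^q$ by $4\Norm{v}_{L^\infty}^2\Norm{w^N-w}_{L^q}$. This tends to $0$ because $w^N\to w$ in $L^1$ by \eqref{ass:A2}, the uniform bound on $w^N$ and on $w$ (the latter $\le1$ a.e. as an $L^1$ limit) upgrades this to $L^q$ convergence by interpolation, and $\Norm{w^N-w}_{L^q}^q\le\Norm{w^N-w}_{L^\infty}^{q-1}\Norm{w^N-w}_{L^1}$.

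\emph{Main obstacle.} The only delicate point is the almost everywhere convergence of the cell averages in Step 2. I would avoid relying on it by using the density argument instead: for continuous $g$, $\ZN[g]\to g$ uniformly, and $\Norm{\ZN[v-g]}_{L^1}\le\Norm{v-g}_{L^1}$. This yields $L^1$ convergence of $v^N$, and interpolation with the uniform $L^\infty$ bound then gives $L^q$ convergence for every $1\le q<\infty$.
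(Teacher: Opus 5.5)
Your proof is correct and follows the paper's argument essentially step by step: piecewise constancy and zero mean, convergence of the cell averages upgraded to $L^q$ via the uniform $L^\infty$ bound, the estimate $\Norm{\dd v^N-\dd v}_{L^q(\II)}\le 2\Norm{v^N-v}_{L^q(\I)}$, and a product splitting that uses $0\le w^N\le 1$ together with $w^N\to w$ in $L^q$ (you split the product the mirror-image way, which makes no difference). Your density argument for Step 2 is valid but not needed, and the ``delicate point'' is not actually delicate: the paper's direct use of Lebesgue differentiation already works at every Lebesgue point $x$ of $v$ without any dyadic or nested structure, because the cell $\I_i^N$ containing $x$ lies inside $[x-1/N,x+1/N]$.
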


\begin{proof}
By construction \eqref{def:ZN}, $v^N=\ZN[v]$ is piecewise constant on each interval $\I_i^N$,
and has zero average over $\I$.
Therefore, $v^N\in \LinftyzeroN(\I)$ and
\[
   \Norm{v^N}_{L^\infty(\I)}\le \Norm{v}_{L^\infty(\I)} \qquad\mbox{for all } N\in\N.
\]
The Lebesgue differentiation theorem yields
$v^N(x)\to v(x)$ for almost all $x\in \I$,
and, consequently, the dominated convergence theorem gives
\[
v^N\to v
\qquad\text{in }L^q(\I)
\qquad\text{for every }1\le q<\infty.
\]

Next, for any $(x,y)\in\II$,
\[
   |\dd v^N(x,y)-\dd v(x,y)|   \le |v^N(y)-v(y)|+|v^N(x)-v(x)|.
\]
Hence, for every $q\ge1$,
\[
|\dd v^N(x,y)-\dd v(x,y)|^q
\le
2^{q-1}\Bigl(|v^N(y)-v(y)|^q+|v^N(x)-v(x)|^q\Bigr).
\]
Integrating over $\II$, we obtain
\[
\|\dd v^N-\dd v\|_{L^q(\II)}^q
\le
2^q\|v^N-v\|_{L^q(\I)}^q,
\]
and therefore
\[
\dd v^N\to \dd v
\qquad\text{in }L^q(\II)
\qquad\text{for every }1\le q<\infty.
\]
Due to the uniform bound
\[
   \Norm{\dd v^N}_{L^\infty(\II)}  \leq   2\Norm{v^N}_{L^\infty(\I)}  \leq  2\Norm{v}_{L^\infty(\I)},
\]
it follows that
\[
(\dd v^N)^2\to (\dd v)^2
\qquad\text{in }L^q(\II)
\qquad\text{for every }1\le q<\infty.
\]

By \eqref{ass:A2} and the bound $\Norm{w^N}_{L^\infty(\II)}\le 1$, we have
\[
w^N\to w
\qquad\text{in }L^q(\II)
\qquad\text{for every }1\le q<\infty.
\]
Therefore, with the uniform $L^\infty$-bounds on $(\dd v^N)^2$ and $w$, we obtain
\begin{align*}
\big\|(\dd v^N)^2w^N-(\dd v)^2w\big\|_{L^q(\II)}
\le \ &
\big\|(\dd v^N)^2\big\|_{L^\infty(\II)}\|w^N-w\|_{L^q(\II)} \\
& +
\|w\|_{L^\infty(\II)}\big\|(\dd v^N)^2-(\dd v)^2\big\|_{L^q(\II)},
\end{align*}
and the claim follows.
\end{proof}

\begin{lemma}\label{lem:gamma1}
Let $b^N\rightharpoonup b$ weakly in $L^1(\II)$ and
$\{a^N\}_{N\in\N}\subset L^\infty(\II)$ be such that
\[
    \sup_{N\in\N} \|a^N\|_{L^\infty(\II)}<\infty, \qquad
   a^N\to a \quad\text{in measure on } \II,
\]
for some $a\in L^\infty(\II)$.
Then
\[
   \iintII b^N a^N\,\d x \, \d y \to \iintII b \, a\,\d x \, \d y \qquad\mbox{as } N\to\infty.
\]
\end{lemma}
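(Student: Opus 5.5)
The plan is to split the difference $\iintII b^N a^N - \iintII b\,a$ into a term where the limit $a$ is frozen and a term carrying the error $a^N-a$:
\[
   \iintII b^N a^N - \iintII b\, a = \iintII (b^N - b)\, a + \iintII b^N (a^N - a).
\]
The first term tends to zero directly from the definition of weak convergence in $L^1(\II)$, because $a\in L^\infty(\II)=(L^1(\II))^*$. All the work is therefore in the second term, where $b^N$ is only weakly convergent and $a^N-a$ is only small in measure. There is no uniform $L^\infty$ control on $b^N$, so the product cannot be estimated naively.

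The key tool for the second term is the Dunford--Pettis theorem: a weakly convergent sequence in $L^1(\II)$ is weakly relatively compact, hence bounded and uniformly integrable. Concretely:
\begin{enumerate}[(a)]
\item $K:=\sup_N\Norm{b^N}_{L^1(\II)}<\infty$;
\item for every $\eps>0$ there is $\delta>0$ such that $|E|<\delta$ implies $\sup_N\int_E|b^N|<\eps$.
\end{enumerate}

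Fix $\eps>0$, and let $M:=\sup_N\Norm{a^N}_{L^\infty(\II)}+\Norm{a}_{L^\infty(\II)}$. For $\eta>0$ set
\[
   E_N:=\{(x,y)\in\II:\,|a^N-a|>\eta\}.
\]
On the complement of $E_N$ we have $|a^N-a|\le\eta$, while on $E_N$ we have $|a^N-a|\le M$. This gives
\[
   \left|\iintII b^N(a^N-a)\right|\le \eta K + M\int_{E_N}|b^N|.
\]
Choose $\eta=\eps/K$ (assuming $K>0$). Convergence in measure gives $|E_N|\to0$, so $|E_N|<\delta$ for $N$ large, and uniform integrability then bounds the last integral by $M\eps$. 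Hence the second term is at most $(1+M)\eps$ eventually. Since $\eps$ was arbitrary, the claim follows.

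The only genuinely nontrivial step is invoking uniform integrability via Dunford--Pettis. The rest is the elementary splitting above, so I expect no real obstacle beyond stating that theorem correctly.
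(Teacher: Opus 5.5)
Your proposal is correct and follows the paper's proof essentially step for step. The common steps are the splitting into $\iintII (b^N-b)\,a$ and $\iintII b^N(a^N-a)$, boundedness and uniform integrability of $\{b^N\}$ via Dunford--Pettis, and the estimate over the set where $|a^N-a|$ exceeds a threshold and its complement. The only cosmetic difference is that you take the threshold $\eta=\eps/K$ to get the bound $(1+M)\eps$, whereas the paper uses the threshold $\eps$ directly and obtains $(C+M)\eps$.
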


\begin{proof}
We write
\[
   \iintII (b^N a^N - ba) \,\d x \, \d y 
   =
   \iintII b^N(a^N-a) \,\d x \, \d y 
   +
   \iintII (b^N-b)a \,\d x \, \d y.
\]
The second term tends to zero as $N\to\infty$ due to the weak convergence
$b^N\rightharpoonup b$ in $L^1(\II)$. Therefore we need to prove that
\begin{equation*}  
   \iintII b^N(a^N-a)\,\d x \, \d y \to 0 \qquad \text{as }N \to \infty.
\end{equation*}
Since $b^N\rightharpoonup b$ weakly in $L^1(\II)$, the sequence
$\{b^N\}_{N\in\N}$ is bounded in $L^1(\II)$ and we denote
\[
   C:=\sup_{N\in\N} \Norm{b^N}_{L^1(\II)} < \infty.
\]
Moreover, by the Dunford--Pettis theorem, $\{b^N \}_{N \in \N}$ is uniformly integrable, that is, for every $\varepsilon>0$ there exists $\delta>0$ such that,
for any measurable set $E\subset\II$ with $|E|<\delta$ one has
\[
   \sup_{N\in\N} \iint_E |b^N|\,\d x \, \d y <\varepsilon.
\]
Since $a^N\to a$ in measure, the sets
\[
   E_N^\varepsilon:=\big\{(x,y)\in \II:\, |a^N(x,y)-a(x,y)|>\varepsilon\big\}, \qquad \varepsilon > 0,
\]
satisfy $|E_N^\varepsilon|\to 0$ as $N \to \infty$. Hence for $N\in\N$ sufficiently large,
$|E_N^\varepsilon|<\delta$. Then
\[
\begin{aligned}
   \left|\iintII b^N(a^N-a)\,\d x \,\d y \right|
   & \le
   \iint_{\II\setminus E_N^\varepsilon}|b^N|\,|a^N-a|\,\d x \, \d y
   +
   \iint_{E_N^\varepsilon}|b^N|\,|a^N-a|\,\d x \, \d y  \\
   &\le
   \varepsilon \|b^N\|_{L^1(\II)}
   +
   M\iint_{E_N^\varepsilon}|b^N|\,\d x \, \d y  \\
   &\le
   (C+M) \varepsilon,
\end{aligned}
\]
where we denoted
\[
   M:=\sup_{N\in\N} \|a^N\|_{L^\infty(\II)}+\|a\|_{L^\infty(\II)}<\infty .
\]
Since $\varepsilon>0$ was arbitrary, the result follows.
\end{proof}

\begin{lemma}\label{lem:weakLSC}
Let $a^N \rightharpoonup a$ weakly in $L^1(\II)$ as $N\to\infty$.
Then for any $\gamma\geq 1$ we have
\[
   \Norm{a}_{L^\gamma(\II)} \leq \liminf_{N\to\infty} \Norm{a^N}_{L^\gamma(\II)}.
\]
\end{lemma}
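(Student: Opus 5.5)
We prove the claimed weak lower semicontinuity of the $L^\gamma$ norm by duality, testing against bounded functions so that only weak $L^1$ convergence is required. We may assume $L:=\liminf_{N\to\infty}\Norm{a^N}_{L^\gamma(\II)}<\infty$; otherwise there is nothing to prove. Passing to a subsequence, we may further assume that the $\liminf$ is a limit. The subsequence still converges weakly in $L^1(\II)$ to $a$.

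\emph{Case $\gamma=1$.} Fix $\varphi\in L^\infty(\II)$ with $\Norm{\varphi}_{L^\infty}\le1$. Since $\varphi$ is an admissible test function for weak $L^1$ convergence,
\[
\iintII a\varphi\,\d x\,\d y=\lim_{N\to\infty}\iintII a^N\varphi\,\d x\,\d y\le L .
\]
Choosing $\varphi=\operatorname{sgn}(a)$ gives $\Norm{a}_{L^1}\le L$.

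\emph{Case $\gamma>1$.} Let $\gamma'$ be the conjugate exponent. Fix $\varphi\in L^\infty(\II)$ with $\Norm{\varphi}_{L^{\gamma'}}\le1$. Then $\varphi\in L^\infty$ is again admissible, and Hölder's inequality yields
\[
\iintII a\varphi\,\d x\,\d y=\lim_{N\to\infty}\iintII a^N\varphi\,\d x\,\d y\le\liminf_{N\to\infty}\Norm{a^N}_{L^\gamma}\Norm{\varphi}_{L^{\gamma'}}\le L .
\]
It remains to conclude that $a\in L^\gamma$ with $\Norm{a}_{L^\gamma}\le L$. A priori we only know $a\in L^1$, so we cannot use the dual characterization of the $L^\gamma$ norm directly; this is the one delicate point of the proof.

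To handle it, we truncate. For $M>0$ set $a_M:=\operatorname{sgn}(a)\min\{|a|,M\}\in L^\infty$. If $a_M\neq0$, take
\[
\varphi_M:=\frac{\operatorname{sgn}(a)\,|a_M|^{\gamma-1}}{\Norm{a_M}_{L^\gamma}^{\gamma-1}} .
\]
This function is bounded and satisfies $\Norm{\varphi_M}_{L^{\gamma'}}=1$. Moreover, $a\varphi_M\ge a_M\varphi_M$ pointwise, because $|a|\ge|a_M|$ and both sides are nonnegative. Hence
\[
\Norm{a_M}_{L^\gamma}=\iintII a_M\varphi_M\,\d x\,\d y\le\iintII a\varphi_M\,\d x\,\d y\le L .
\]
Letting $M\to\infty$, monotone convergence gives $\Norm{a}_{L^\gamma}\le L$. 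This proves the claim.
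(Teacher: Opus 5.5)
Your proof is correct. It follows the same overall structure as the paper's: you reduce to $L<\infty$, treat $\gamma=1$ by $L^1$--$L^\infty$ duality, and for $\gamma>1$ use H\"older to get $\iintII a\varphi\,\d x\,\d y\le L\Norm{\varphi}_{L^{\gamma'}(\II)}$ for all bounded $\varphi$. You close the case $\gamma>1$ differently, however.

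The paper extends $\varphi\mapsto\iintII a\varphi\,\d x\,\d y$ to all of $L^{\gamma'}(\II)$, using density of $L^\infty(\II)$, obtaining a functional of norm at most $L$. It then invokes the Riesz representation $(L^{\gamma'})^*=L^\gamma$. Its final line implicitly identifies the representing function with $a$, which is valid because both act identically on every bounded test function, e.g.\ indicators.

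You avoid both the density extension and the representation theorem. Your explicit bounded norming functions $\varphi_M$, built from the truncations $a_M$, combine with the pointwise inequality $a\varphi_M\ge a_M\varphi_M$ and monotone convergence. This yields $a\in L^\gamma(\II)$ and $\Norm{a}_{L^\gamma(\II)}\le L$ directly.

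The paper's route is shorter because it cites standard duality theory. Yours is fully elementary and explicitly settles the point you rightly flagged as delicate: a priori $a$ is only known to lie in $L^1(\II)$.
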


\begin{proof}
Denote
\[
   L := \liminf_{N\to\infty} \Norm{a^N}_{L^\gamma(\II)}.
\]
If $L=\infty$, then there is nothing to prove; therefore we may assume $L < \infty$.

We first consider the case $\gamma=1$. Since $a^N\rightharpoonup a$ weakly in $L^1(\II)$, for every $\varphi\in L^\infty(\II)$ with
$\Norm{\varphi}_{L^\infty(\II)}\le 1$ we have
\[
   \left|\iintII a \, \varphi\,\d x\,\d y\right|
   =
   \left|\lim_{N\to\infty}\iintII a^N \varphi\,\d x\,\d y\right|
   \le
   \liminf_{N\to\infty}\Norm{a^N}_{L^1(\II)}.
\]
Taking the supremum over all such $\varphi$ and using the dual representation of the $L^1$ norm gives
\[
   \Norm{a}_{L^1(\II)}
   \le
   \liminf_{N\to\infty}\Norm{a^N}_{L^1(\II)}.
\]

Let now $\gamma>1$ and set $\gamma':=\frac{\gamma}{\gamma-1}$. For any $\varphi\in L^\infty(\II)$, weak convergence in $L^1(\II)$ gives
\[
   \iintII a \, \varphi\,\d x\,\d y
   =
   \lim_{N\to\infty}\iintII a^N\varphi\,\d x\,\d y.
\]
Hence, by H\"older's inequality,
\begin{align*}
   \left| \iintII a \, \varphi \,\d x \, \d y  \right|
   &\leq
     \liminf_{N\to\infty} \Norm{a^N}_{L^\gamma(\II)}
     \Norm{\varphi}_{L^{\gamma'}(\II)} \\
   &= L \Norm{\varphi}_{L^{\gamma'}(\II)}.
\end{align*}
Since $L^\infty(\II)$ is dense in $L^{\gamma'}(\II)$, the functional
\[
   \varphi\mapsto \iintII a \, \varphi\,\d x\,\d y
\]
extends continuously to $L^{\gamma'}(\II)$ with operator norm at most $L$.
By the duality between $L^\gamma(\II)$ and $L^{\gamma'}(\II)$, we conclude that $a\in L^\gamma(\II)$ and
\[
   \Norm{a}_{L^\gamma(\II)}
   =
   \sup_{\Norm{\varphi}_{L^{\gamma'}(\II)}\leq 1}
   \left| \iintII a \, \varphi\,\d x\,\d y \right|
   \le L.
\]
This proves the claim.
\end{proof}

\begin{lemma}\label{lem:strongConv}
Let the sequence $\{a^N\}_{N\in\N}$ be uniformly bounded in $L^\infty(\II)$
and converge in $L^1(\II)$ to $a\in L^\infty(\II)$. Moreover, let $b^N \to b$ in $L^1(\II)$ as $N\to\infty$.
Then
\[
   \iintII a^N b^N \, \d x \, \d y \to \iintII a \, b \, \d x \, \d y \qquad\mbox{as } N\to\infty.
\]
\end{lemma}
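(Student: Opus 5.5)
The plan is to split the integral with the triangle inequality, as in the proof of Lemma~\ref{lem:gamma1}, writing
\[
   \iintII \big(a^N b^N - a\,b\big)\,\d x\,\d y
   =
   \iintII a^N\big(b^N-b\big)\,\d x\,\d y
   +
   \iintII \big(a^N-a\big)\,b\,\d x\,\d y .
\]
The first term is handled by the uniform $L^\infty$ bound. Setting $M:=\sup_{N}\Norm{a^N}_{L^\infty(\II)}<\infty$, the integral is bounded in absolute value by $M\Norm{b^N-b}_{L^1(\II)}$, which tends to zero by assumption.

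The second term is where some care is needed. The function $b$ is only in $L^1(\II)$, while $a^N-a$ converges to zero only in $L^1(\II)$, so Hölder's inequality cannot be applied directly. Two routes are available.

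\emph{First route.} Note that $\Norm{a}_{L^\infty}\le M$, since an $L^1$ limit has an a.e.\ convergent subsequence. Hence $|a^N-a|\le 2M$, and $L^1$ convergence implies convergence in measure. The second term is then exactly of the type treated in Lemma~\ref{lem:gamma1}, applied with the constant sequence $b^N\equiv b$, which trivially converges weakly in $L^1(\II)$. That lemma gives $\iintII (a^N-a)\,b \to 0$.

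\emph{Second route (self-contained).} Fix $\eps>0$ and use absolute continuity of the integral to choose $\delta>0$ with $\iint_E|b|<\eps$ whenever $|E|<\delta$. On the set $E_N:=\{|a^N-a|>\eps\}$, Chebyshev's inequality gives $|E_N|\le \eps^{-1}\Norm{a^N-a}_{L^1}\to0$. Then split the integral over $E_N$ and its complement, bounding the integrand by $2M|b|$ on $E_N$ and by $\eps|b|$ off $E_N$.

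Alternatively, one could truncate $b$ at level $K$ and use dominated convergence for the tail.

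Combining the two estimates gives $\limsup_N|\iintII(a^Nb^N-ab)|\le (2M+\Norm{b}_{L^1})\eps$, and letting $\eps\to0$ concludes. The only mildly delicate point is the second term; it is resolved by uniform integrability of the single function $b$ together with the uniform $L^\infty$ bound.
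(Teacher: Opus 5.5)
Your proof is correct and follows the paper's proof closely: the same decomposition, and the same bound $M\Norm{b^N-b}_{L^1(\II)}$ for the first term. For the second term, the paper uses exactly the truncation alternative you mention, bounding it by $K\Norm{a^N-a}_{L^1(\II)}+C\iint_{\{|b|>K\}}|b|$, while your two main routes (Chebyshev plus absolute continuity of $\iint|b|$, or Lemma~\ref{lem:gamma1} with the constant sequence $b$) work equally well. Note that your first route could be applied to the whole integral at once: strong $L^1$ convergence of $b^N$ implies weak convergence, and $L^1$ convergence of $a^N$ implies convergence in measure, so Lemma~\ref{lem:gamma1} gives the statement directly without any splitting.
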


\begin{proof}
We write
\[
   \iintII \left( a^N b^N -  ab \right) \d x \, \d y  =
      \iintII a^N(b^N-b)\,\d x \, \d y  + \iintII b(a^N-a)\,\d x \, \d y.
\]
The first term of the right-hand side vanishes in the limit due to the strong convergence of $b^N$ to $b$ in $L^1(\II)$
and the uniform boundedness of $a^N$ in $L^\infty(\II)$.

Denote
\[
   C:= \|a\|_{L^\infty(\II)} + \sup_{N\in\mathbb N}\|a^N\|_{L^\infty(\II)}<\infty .
\]
Fix $K>0$. Then
\[
\begin{aligned}
\left|\iintII b(a^N-a)\,\d x \, \d y\right|
&\le
\iint_{\{|b|\le K\}} |b|\,|a^N-a|\,\d x \, \d y
+
\iint_{\{|b|>K\}} |b|\,|a^N-a|\,\d x \, \d y  \\
&\le
K\|a^N-a\|_{L^1(\II)}
+
C\iint_{\{|b|>K\}} |b|\,\d x \, \d y .
\end{aligned}
\]
Taking $N\to\infty$ gives
\[
   \limsup_{N\to\infty}  \left|\iintII b(a^N-a)\,\d x \, \d y\right|
   \le C\iint_{\{|b|>K\}} |b|\,\d x \, \d y,
\]
and the right-hand side vanishes as $K\to\infty$ due to $b\in L^1(\II)$.
\end{proof}

{\small
\section*{Acknowledgments}
This publication is based upon work partially supported by KAUST under Award No.\ ORFS-CRG12-2024-6430.}

\end{document}